\newfont{\gothic}{eufm10 scaled 1100}
\theoremstyle{plain}    
\newtheorem{thm}{Theorem}[section]
\numberwithin{equation}{section} 
\numberwithin{figure}{section} 
\theoremstyle{plain}    
\newtheorem{cor}[thm]{Corollary} 
\theoremstyle{plain}    
\newtheorem{conj}[thm]{Conjecture} 
\theoremstyle{plain}    
\theoremstyle{plain}
\newtheorem{lem}[thm]{Lemma} 
\theoremstyle{plain}    
\newtheorem{prop}[thm]{Proposition} 
\theoremstyle{plain}    
\newtheorem{Def}[thm]{Definition} 
\theoremstyle{remark}
\newtheorem{rem}[thm]{Remark}
\theoremstyle{remark}
\begin{document}

\title{Numerical Analogues of the Kodaira Dimension and the Abundance Conjecture}


\author{Thomas Eckl}

\keywords{numerical Kodaira dimension, Abundance Conjecture}

\subjclass{14C20, 14E05}


\address{Thomas Eckl, Department of Mathematical Sciences, The University of Liverpool, Mathematical
               Sciences Building, Liverpool, L69 7ZL, England, U.K.}

\email{thomas.eckl@liv.ac.uk}

\urladdr{http://pcwww.liv.ac.uk/~eckl/}

\maketitle

\begin{abstract}
We add further notions to Lehmann's list of numerical analogues to the Kodaira dimension of pseudo-effective divisors on smooth complex projective varieties, and show new relations between them. Then we use these notions and relations to fill in a gap in Lehmann's arguments, thus proving that most of these notions are equal. Finally, we show that the Abundance Conjecture, as formulated in the context of the Minimal Model Program, and the Generalized Abundance Conjecture using these numerical analogues to the Kodaira dimension, are equivalent for non-uniruled complex projective varieties.
\end{abstract}


\pagestyle{myheadings}
\markboth{\scshape{Thomas Eckl}}{\scshape{Numerical Analogues of the Kodaira Dimension}}

\setcounter{section}{-1}

\section{Introduction}

\noindent During the last decade a plethora of numerical analogues to the Kodaira dimension for pseudoeffective divisors on (smooth) complex projective varieties was introduced, by Nakayama \cite{Nak04}, Demailly, Boucksom, P\u{a}un and Peternell \cite{BDPP13}, Siu \cite{Siu11} and Lehmann \cite{Leh13}. Lehmann furthermore clarified lots of relations between these numerical dimensions, adding some new notions, ordering them by the way how they are constructed and showing that most of them are at least related by an inequality. However, his results contain a gap leaving the equality of most of these notions unproven, see the discussion in Section 2.9. This note fills in the gap in Chapter~3 extending results on the derivatives of the volume function in  \cite{BDPP13} and \cite{BFJ09}. Furthermore, we slightly extend Lehmann's list and prove some more relations. Finally we show that the Abundance Conjecture as formulated in the context of the Minimal Model Program (see e.g. \cite[Conj.3-3-4]{Mat02}) is equivalent to a Generalised Abundance Conjecture introduced in \cite{BDPP13}. On the way, we prove the birational equivalence of most of these notions of numerical dimension.  

\noindent In more detail, we will discuss the following notions of numerical dimension, ordered according to their construction method as suggested by Lehmann, and postponing some technical definitions to section~\ref{ND-defs-sec}:
\begin{Def} \label{ND-defs}
Let $X$ be a smooth complex projective variety and $D$ a pseudoeffective $\mathbb{R}$-divisor on $X$. Then we define the following numerical dimensions using
\begin{itemize}
\item positive product conditions:
\begin{enumerate}
\item $\nu_{\mathrm{K\ddot{a}h}}(D) := \max \left\{ k \in \mathbb{N} | \langle [D]^k \rangle_{\mathrm{K\ddot{a}h}} \neq 0 \right\}$, where $[D]$ denotes the $(1,1)$-cohomology class of the integration current associated to $D$;
\item $\nu_{\mathrm{alg}}(D) := \max \left\{ k \in \mathbb{N} | \langle D^k \rangle_{\mathrm{alg}} \neq 0 \right\}$;
\item $\nu_{\mathrm{res}}(D) := \max \left\{ \dim W | \langle D^{\dim W} \rangle_{X|W} > 0 \right\}$ where $ W \subset X$ ranges over subvarieties not contained in the diminished base locus $\mathbb{B}_-(D)$ (defined in \ref{nu-res-ssec});
\end{enumerate}
\item volume conditions:
\begin{enumerate}
\setcounter{enumi}{3}
\item $\nu_{\mathrm{Vol}}(D) := \max \left\{ k \in \mathbb{N} | \exists C > 0 : C \cdot t^{n-k} < \mathrm{vol}(D + tA)\ \mathrm{for\ all\ } t > 0 \right\}$, where $A$ is a sufficiently ample $\mathbb{Z}$-divisor on $X$;
\item $\nu_{\mathrm{Vol, res}}(D) := \max \left\{ \dim W | \lim_{\epsilon \rightarrow 0} \mathrm{vol}_{X|W}(D + \epsilon A) > 0 \right\}$, where $ W \subset X$ ranges over subvarieties not contained in $\mathbb{B}_-(D)$ and $A$ is a sufficiently ample $\mathbb{Z}$-divisor on $X$;
\item $\nu_{\mathrm{Vol, Zar}} := \max \left\{ \dim W | \inf_\phi \mathrm{vol}_{\widetilde{W}}(P_\sigma(\phi^\ast D)_{|\widetilde{W}}) > 0 \right\}$, where $ W \subset X$ ranges over subvarieties not contained in $\mathbb{B}_-(D)$, the morphism $\phi: (\widetilde{X}, \widetilde{W}) \rightarrow (X,W)$ ranges over all smooth $W$-birational models of $(X,W)$ and $P_\sigma(\phi^\ast D)$ is the positive part of the Zariski decomposition of $\phi^\ast D$ (all defined in \ref{nu-vol-zar-ssec});
\end{enumerate}
\item perturbed growth conditions:
\begin{enumerate}
\setcounter{enumi}{6}
\item $\kappa_{\sigma}(D) := \max \{ k \in \mathbb{N} | \limsup_{m \rightarrow \infty} m^{-k} h^0(X, \mathcal{O}_X(A+\lfloor mD \rfloor)) > 0 \}$, where $A$ is a sufficiently ample $\mathbb{Z}$-divisor;
\item $\kappa_{\mathrm{num}}(D) := \sup_{k \geq 1} \left\{ \limsup_{m \rightarrow \infty} \frac{\log h^0(X, \mathcal{O}_X(\lfloor mD \rfloor + kA))}{\log m} \right\}$, where $A$ is an ample $\mathbb{Z}$-divisor;
\end{enumerate}
\item Seshadri-type conditions:
\begin{enumerate}
\setcounter{enumi}{8}
\item $\kappa_\nu(D) := \min \left\{ \dim W | D \not\succeq W \right\}$, where $D \succeq W$ means that $D$ dominates $W$ (defined in \ref{kappa-nu-ssec});
\item $\kappa_{\nu, \mathrm{Leh}}(D) := \min \left\{ \dim W | \forall \epsilon > 0: \phi_W^\ast D - \epsilon E_W\ \mathrm{not\ pseudoeffective} \right\}$, where $\phi_W: \widetilde{X} \rightarrow X$ is any birational morphism of smooth varieties such that $\mathcal{O}_{\widetilde{X}}(E_W) = \phi^{-1}\mathcal{I}_W \cdot \mathcal{O}_{\widetilde{X}}$.
\end{enumerate}
\end{itemize}
\end{Def}

\noindent For attributions of these definitions see also section~\ref{ND-defs-sec}.

\begin{thm} \label{numdim-eq-thm}
Let $X$ be a smooth complex projective variety and $D$ a pseudoeffective $\mathbb{R}$-divisor on $X$. Then all the notions of numerical dimension listed in Def.~\ref{ND-defs} are equal, except $\kappa_{\nu, \mathrm{Leh}}(D)$ which may be smaller.
\end{thm}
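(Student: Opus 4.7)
The plan is to establish a cycle of inequalities among the first nine notions, forcing them to coincide, and then compare $\kappa_{\nu,\mathrm{Leh}}$ separately. Lehmann's paper already supplies most of the links in the chain, grouped according to the construction types listed in Definition~\ref{ND-defs}: within the positive-product group the relations $\nu_{\mathrm{K\ddot{a}h}} \geq \nu_{\mathrm{alg}} \geq \nu_{\mathrm{res}}$ follow because restricting positive products to subvarieties not contained in $\mathbb{B}_-(D)$ can only reduce the range of nonvanishing; within the volume group, $\nu_{\mathrm{Vol}}$, $\nu_{\mathrm{Vol,res}}$ and $\nu_{\mathrm{Vol,Zar}}$ are linked via the identification of restricted volumes with derivatives of positive products, together with Zariski-decomposition comparisons on smooth birational models; and $\kappa_\sigma = \kappa_{\mathrm{num}}$ is immediate from the definitions. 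The Seshadri-type invariant $\kappa_\nu$ fits into the chain through the dominance property, which I would link to $\nu_{\mathrm{res}}$ by observing that $D \succeq W$ is detected on $\widetilde{W}$ by positive restricted products.

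With these blocks in hand, the cycle I would close is of the form
\[
\nu_{\mathrm{K\ddot{a}h}} \geq \nu_{\mathrm{alg}} \geq \nu_{\mathrm{res}} \geq \kappa_\nu \geq \nu_{\mathrm{Vol,res}} \geq \nu_{\mathrm{Vol,Zar}} \geq \nu_{\mathrm{Vol}} \geq \kappa_{\mathrm{num}} = \kappa_\sigma \geq \nu_{\mathrm{K\ddot{a}h}},
\]
where all links except the last are available from Lehmann's work (sometimes after minor refinements using the fact that the relevant $W$ can be chosen smooth and transverse). The crucial new step, and in my view the main obstacle, is the final inequality $\kappa_\sigma \geq \nu_{\mathrm{K\ddot{a}h}}$, equivalently the volume lower bound $\mathrm{vol}(D+tA) \geq C\, t^{n-k}$ when $\langle [D]^k \rangle_{\mathrm{K\ddot{a}h}} \neq 0$. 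This is precisely the gap in Lehmann's argument.

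To overcome it, I would extend the derivative-of-volume formulas from \cite{BDPP13} and \cite{BFJ09}. The first derivative identity
\[
\frac{d}{dt}\bigg|_{t=0^+} \mathrm{vol}(D+tA) = n \cdot \langle [D]^{n-1} \rangle_{\mathrm{K\ddot{a}h}} \cdot [A]
\]
gives the case $k = n-1$ directly. For general $k$, the strategy is induction on $n-k$: one applies the formula on the movable model of $D$ obtained by a Fujita-approximation type construction, perturbs by $tA$, and extracts the leading order behaviour of the positive product $\langle (D+tA)^{n-1} \rangle_{\mathrm{K\ddot{a}h}} \cdot [A]$ as $t \to 0^+$ from an iterated application. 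The main technical difficulty is that the volume function is only $\mathcal{C}^1$ on the big cone and its derivatives at the pseudoeffective boundary need to be interpreted via continuity of positive products and monotone convergence along ample perturbations; here I expect to use the continuity results of Boucksom--Favre--Jonsson together with a diagonal argument ensuring that the nonvanishing of $\langle [D]^k \rangle_{\mathrm{K\ddot{a}h}}$ survives the perturbation.

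Finally, $\kappa_{\nu,\mathrm{Leh}}(D) \leq \kappa_\nu(D)$ follows because the non-pseudoeffectivity of $\phi_W^\ast D - \epsilon E_W$ for all $\epsilon > 0$ is weaker than the failure of dominance $D \succeq W$, so the minimum in the definition of $\kappa_{\nu,\mathrm{Leh}}$ is taken over a larger class of subvarieties $W$; that this inequality can be strict is visible on examples where $D$ fails to be properly Seshadri-exceptional with respect to some $W$ of small dimension without the genuine dominance property being lost. This explains why $\kappa_{\nu,\mathrm{Leh}}$ is left out of the main cycle.
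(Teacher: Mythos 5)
Your overall strategy---close a cycle of inequalities among the first nine invariants and treat $\kappa_{\nu,\mathrm{Leh}}$ separately---is the same as the paper's, and several of your links ($\nu_{\mathrm{alg}}$ versus $\nu_{\mathrm{K\ddot{a}h}}$, the equalities within the volume group, $\kappa_{\nu,\mathrm{Leh}}(D)\leq\kappa_\nu(D)$) are indeed available from Lehmann or from short direct arguments. However, you have misidentified where the gap in Lehmann's argument actually sits, and as a consequence you assume away the genuinely hard step. The inequality you single out as the ``crucial new step,'' namely $\kappa_\sigma(D)\geq\nu_{\mathrm{K\ddot{a}h}}(D)$, is \emph{not} the gap: Lehmann already proves $\nu_{\mathrm{Vol,Zar}}(D)\leq\kappa_\sigma(D)$ and the equality of $\nu_{\mathrm{Vol,Zar}}$ with the positive-product invariants, which gives that bound. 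The actual break in Lehmann's cycle is the return arrow from the Seshadri-type invariant back to the positive-product invariants: Lehmann proves $\kappa_{\nu,\mathrm{Leh}}(D)\leq\nu_{\mathrm{alg}}(D)$ and claims $\kappa_{\nu,\mathrm{Leh}}(D)=\kappa_\nu(D)$, but the latter fails (projections of the pseudoeffective cone need not be closed, so non-pseudoeffectivity of $\phi^\ast D-\epsilon E_W$ for all $\epsilon$ does not imply $D\not\succeq W$), while Nakayama's $\kappa_\sigma(D)\leq\kappa_\nu(D)$ only works for $\kappa_\nu$, not for $\kappa_{\nu,\mathrm{Leh}}$. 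So what must be proved from scratch is $\kappa_\nu(D)\leq\nu_{\mathrm{alg}}(D)$. In your chain this is exactly the link $\nu_{\mathrm{res}}\geq\kappa_\nu$, which you dispose of in one sentence by saying that $D\succeq W$ ``is detected on $\widetilde W$ by positive restricted products''; making that detection quantitative is precisely the content of the missing theorem, and it is not in Lehmann's paper.

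Your proposed technique for the step you do flag as new would also not work as described. You suggest iterating the first-derivative formula of Boucksom--Favre--Jonsson and inducting on $n-k$; but the volume function is only $\mathcal{C}^1$ on the big cone, so there is no iterated derivative to extract, and the continuity/monotone-convergence language does not replace an actual estimate. The ingredient the argument really needs is a derivative formula for the \emph{restricted} volume $\mathrm{vol}_{X|V}$ along a very general complete-intersection subvariety $V$ (Theorem~\ref{der-resvol-thm}), proved via restricted analogues of the diagram-chase and expansion estimates of Lazarsfeld and of Boucksom--Favre--Jonsson. That formula, applied on the blow-up of $X$ along a suitable $k$-dimensional complete intersection $W$, bounds the ratio $\tau/\epsilon$ appearing in Nakayama's numerical dominance condition by a quantity of the form $\langle(\phi^\ast(D+\epsilon H)+\epsilon H_Y)^{k+1}\rangle_Y\cdot H_Y^{n-k-1}$ divided by a restricted product that stays bounded away from zero; since the numerator is $O(\epsilon)$ when $\nu_{\mathrm{alg}}(D)=k$, this contradicts the unboundedness of $\tau/\epsilon$ required by $D\succeq W$. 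Without this (or an equivalent substitute) your cycle does not close, so the proposal has a genuine gap at its central step.
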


\noindent This theorem is a consequence of the following net of equalities and inequalities:
\[ \arraycolsep=1.4pt
    \begin{array}{ccccccc}
    \nu_{\mathrm{Vol}}(D) &  &  & \kappa_{\mathrm{num}}(D) & \xlongequal{\rule{0.5cm}{0cm}} & \kappa_{\mathrm{num}}(D) & \\
    \rotatebox{90}{=} &  & &  \rotatebox{90}{$\leq$} & &  \rotatebox{90}{$\geq$} &  \\
    \nu_{\mathrm{alg}}(D) = & \nu_{\mathrm{res}}(D) = & \nu_{\mathrm{Vol,res}}(D) = & \nu_{\mathrm{Vol,Zar}}(D) \leq &  
    \kappa_\sigma(D) \leq & \kappa_\nu(D) \leq & \nu_{\mathrm{alg}}(D) \\
    \rotatebox{90}{$=$} & & & & &  \rotatebox{90}{$\leq$} & \\
    \nu_{\mathrm{K\ddot{a}h}}(D) & & & & & \kappa_{\nu, \mathrm{Leh}}(D) & 
    \end{array} \]

\noindent In section~\ref{ineq-ND-sec} and \ref{kn-nalg-sec} we will prove the equality $\nu_{\mathrm{alg}}(D) = \nu_{\mathrm{K\ddot{a}h}}(D)$ and the inequalities $\kappa_\nu(D) \leq \nu_{\mathrm{alg}}(D)$, $\nu_{\mathrm{Vol, Zar}}(D) \leq \kappa_{\mathrm{num}}(D)$, $\kappa_{\mathrm{num}}(D) \leq \kappa_\nu(D)$ and $\kappa_{\nu, \mathrm{Leh}}(D) \leq \kappa_\nu(D)$, and we locate the proofs of the other inequalities in the works of Lehmann~\cite{Leh13} and Nakayama~\cite{Nak04}. Our proofs of $\nu_{\mathrm{Vol, Zar}}(D) \leq \kappa_{\mathrm{num}}(D)$ and $\kappa_{\mathrm{num}}(D) \leq \kappa_\nu(D)$ also rely on Lehmann's ideas. 

\noindent But for the proof of $\kappa_\nu(D) \leq \nu_{\mathrm{alg}}(D)$ we need a new ingredient: the derivative of the restricted volume, generalizing Thm.A in \cite{BFJ09} (see Thm~\ref{der-resvol-thm}). Note that Lehmann \cite[Thm.5.3]{Leh13} already shows $\kappa_{\nu,\mathrm{Leh}}(D) \leq \nu_{\mathrm{alg}}(D)$ but his claim of equality $\kappa_{\nu,\mathrm{Leh}}(D) = \kappa_\nu(D)$ fails. Since the proof of $\kappa_\sigma(D) \leq \kappa_\nu(D)$ only works for Nakayama's definition of $\kappa_\nu(D)$, we need the new ingredient to close the gap.

\noindent The theorem shows that most of the notions in Def.~\ref{ND-defs} are equal. Therefore the following definition is justified:
\begin{Def} \label{nd-def}
Let $X$ be a smooth complex projective variety and $D$ a pseudoeffective $\mathbb{R}$-divisor on $X$. Then the numerical dimension $\nu_X(D)$ of $D$ is defined as one of the equal numbers
\[ \nu_{\mathrm{alg}}(D) = \nu_{\mathrm{res}}(D) = \nu_{\mathrm{Vol}}(D) = \nu_{\mathrm{Vol,res}}(D) =  \nu_{\mathrm{Vol,Zar}}(D) = \kappa_\sigma(D) = \kappa_\nu(D) = \kappa_{\mathrm{num}}(D), \]
and $\nu_X(D)$ only depends on the numerical class of $D$.
\end{Def}

\noindent In section~\ref{Bir-Abund-sec} we show that the numerical dimension of a pseudoeffective divisor behaves well under birational morphisms, following the ideas of Nakayama but explicitly using Thm.~\ref{numdim-eq-thm}:
\begin{prop}[\textbf{$=$ Proposition~\ref{bir-inv-nd-prop}}]
Let $f: \widetilde{X} \rightarrow X$ be a birational morphism between smooth complex projective varieties, let $D$ be a pseudoeffective divisor on $X$ and $\widetilde{D}$ a pseudoeffective divisor on $\widetilde{X}$ such that $\widetilde{D} - f^\ast D$ is an $f$-exceptional divisor. Then:
\[ \nu_X(D) = \nu_{\widetilde{X}}(\widetilde{D}). \]
\end{prop}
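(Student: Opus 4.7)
The plan is to exploit Theorem \ref{numdim-eq-thm} and work with the formulation best suited to birational comparison, namely $\kappa_\sigma$, for which Nakayama essentially established birational invariance in \cite{Nak04}. The equality $\nu_X = \kappa_\sigma$ from that theorem then transports the result to all the other notions. The argument splits into two steps matching the factorisation $\nu_X(D) = \nu_{\widetilde{X}}(f^\ast D) = \nu_{\widetilde{X}}(\widetilde{D})$.

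For the pullback step $\kappa_\sigma(D) = \kappa_\sigma(f^\ast D)$, we use $Rf_\ast\mathcal{O}_{\widetilde{X}} = \mathcal{O}_X$ and the projection formula to identify $H^0(X,L) \cong H^0(\widetilde{X}, f^\ast L)$ for any Cartier divisor $L$ on $X$. The divisors $f^\ast \lfloor mD \rfloor$ and $\lfloor m f^\ast D \rfloor$ differ by a divisor that is bounded independently of $m$, so for $A$ ample on $X$ and $\widetilde{A}$ sufficiently ample on $\widetilde{X}$ with $\widetilde{A}-f^\ast A$ effective and absorbing the bounded discrepancy, we obtain a two-sided comparison
\[ h^0(X, A+\lfloor mD \rfloor) \leq h^0(\widetilde{X}, \widetilde{A} + \lfloor m f^\ast D \rfloor) \leq h^0(X, A' + \lfloor mD \rfloor) \]
for a larger ample $A'$ on $X$, and the $\limsup_m m^{-k}$ rates coincide.

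For the exceptional step $\kappa_\sigma(f^\ast D) = \kappa_\sigma(\widetilde{D})$, write $E := \widetilde{D}-f^\ast D = E_+ - E_-$ with $E_\pm$ effective, $f$-exceptional, and supported on disjoint loci. The standard absorption identity $f_\ast \mathcal{O}_{\widetilde{X}}(G + kF) = \mathcal{O}_X(f_\ast G)$ for effective $f$-exceptional $F$ and $k\geq 0$ lets us strip off the contribution of $E_+$ from sections without changing growth rates. For the contribution of $E_-$, the trivial inequality $\widetilde{D} \leq f^\ast D + E_+$ (from $\widetilde{D}+E_- = f^\ast D + E_+$) handles one direction. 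The reverse uses that $\widetilde{D}$ is pseudoeffective, so that $E_-$ lies in the negative part of the divisorial Zariski decomposition of $\widetilde{D}$ on $\widetilde{X}$; Nakayama's negativity lemma then guarantees that adding $E_-$ cannot create extra polynomial growth in the sections of $\widetilde{A} + \lfloor m\widetilde{D} \rfloor$.

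The main obstacle lies in this last step: controlling the non-effective component $E_-$ forces a delicate interplay between the pseudoeffectivity of $\widetilde{D}$ and the $f$-exceptionality of $E$, and the negativity lemma is the pivotal ingredient that makes this precise. Once the two chains of inequalities are assembled, Theorem \ref{numdim-eq-thm} converts the resulting identity $\kappa_\sigma(D) = \kappa_\sigma(\widetilde{D})$ into $\nu_X(D) = \nu_{\widetilde{X}}(\widetilde{D})$, finishing the proof.
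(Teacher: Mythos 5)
Your route through $\kappa_\sigma$ is genuinely different from the paper's: the paper never counts sections, but instead proves the multiplicity claim $\mathrm{mult}_{E_i}\Delta\geq\mathrm{mult}_{E_i}E$ for every effective $\Delta\equiv f^\ast D+E$ (via linear independence of numerical classes of $f$-exceptional prime divisors), deduces $P_\sigma(\phi^\ast(f^\ast D+E))=P_\sigma(\phi^\ast f^\ast D)$ on every further model $\phi$, and then reads off the invariance from $\nu_{\mathrm{Vol,Zar}}$ for the exceptional step and from $\nu_{\mathrm{alg}}$ plus the projection formula for the pullback step. Your pullback step and the absorption of the effective part $E_+$ are sound, and in fact Remark~\ref{exc-nd-rem} of the paper already observes that Nakayama's Prop.~V.2.7(4)\&(7) yields exactly this $\kappa_\sigma$-based proof when $\widetilde{D}-f^\ast D$ is \emph{effective} -- which is the hypothesis actually imposed in Proposition~\ref{bir-inv-nd-prop} in the body of the paper, so that $E_-=0$ there and your ``main obstacle'' never arises in the paper's own argument.

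The gap is precisely in your treatment of $E_-$. First, pseudoeffectivity of $\widetilde{D}$ does \emph{not} place $E_-$ inside $N_\sigma(\widetilde{D})$: what is true (and itself needs an argument of the same type as the paper's Claim) is $\mathrm{mult}_\Gamma E_-\leq\sigma_\Gamma(f^\ast D)$ for each component $\Gamma$ of $E_-$, i.e.\ $E_-\leq N_\sigma(f^\ast D)$; the coefficient of $\Gamma$ in $N_\sigma(\widetilde{D})$ is $\sigma_\Gamma(f^\ast D)-\mathrm{mult}_\Gamma E_-$, which can be $0$. Second, and more seriously, the negativity lemma is a statement about relatively nef exceptional divisors and gives no control on section growth. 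To prove $\kappa_\sigma(f^\ast D)\leq\kappa_\sigma(f^\ast D-E_-)$ you must show that sections of $\widetilde{A}+\lfloor m f^\ast D\rfloor$ vanish along $\Gamma$ to order at least $m\,\mathrm{mult}_\Gamma E_-$; what comes for free is vanishing to order roughly $m\,\sigma_\Gamma(f^\ast D+\tfrac1m\widetilde{A})$, and since $\sigma_\Gamma(f^\ast D+\epsilon\widetilde{A})\rightarrow\sigma_\Gamma(f^\ast D)$ possibly slowly, this can fall short of $m\,\mathrm{mult}_\Gamma E_-$ by an unbounded amount when $\mathrm{mult}_\Gamma E_-=\sigma_\Gamma(f^\ast D)$. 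What you are really invoking is $\kappa_\sigma(D')=\kappa_\sigma(P_\sigma(D'))$, a non-trivial theorem of Nakayama, not a consequence of the negativity lemma. Either add the effectivity hypothesis (as the paper does), or handle the exceptional step as the paper does via $P_\sigma$ and $\nu_{\mathrm{Vol,Zar}}$, where removing part of the negative part manifestly does not change $P_\sigma$ and hence costs nothing.
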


\noindent In a celebrated theorem Boucksom, Demailly, P\u{a}un and Peternell show that the canonical divisor $K_X$ of a non-uniruled smooth complex variety $X$ is pseudoeffective \cite[Cor.0.3]{BDPP13}. Consequently, the numerical dimension of the canonical divisor can be used to state the Abundance Conjecture:
\begin{conj}[Abundance Conjecture] \label{Abund-conj}
Let $X$ be a non-uniruled smooth complex projective variety. Then:
\[ \nu(X) := \nu_X(K_X) = \kappa(X). \]
\end{conj}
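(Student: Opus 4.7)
\noindent The plan is to approach the conjecture via the Minimal Model Program, while being honest up front that the Abundance Conjecture is one of the major open problems of algebraic geometry; what follows is a reduction to nef abundance rather than a complete proof. First, I would invoke the MMP to produce a birational map $\phi \colon X \dashrightarrow X_{\min}$ with $K_{X_{\min}}$ nef (conditional on the MMP itself, which is known unconditionally in dimension at most three). Since $X$ is non-uniruled, $K_X$ is pseudoeffective by \cite{BDPP13}, and non-uniruledness is preserved by MMP operations. Choosing a common resolution $p \colon Z \to X$, $q \colon Z \to X_{\min}$, one has $p^\ast K_X = q^\ast K_{X_{\min}} + E$ with $E$ effective and $q$-exceptional; applying Proposition~\ref{bir-inv-nd-prop} to both sides gives
\[ \nu_X(K_X) = \nu_Z(p^\ast K_X) = \nu_Z(q^\ast K_{X_{\min}}) = \nu_{X_{\min}}(K_{X_{\min}}), \]
and the Iitaka dimension $\kappa$ is a standard birational invariant. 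Hence it suffices to prove the equality on $X_{\min}$.

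\noindent Once $K_{X_{\min}}$ is nef, I would exploit Theorem~\ref{numdim-eq-thm} to replace $\nu_{X_{\min}}(K_{X_{\min}})$ by any of its equivalent incarnations. Using $\kappa_\sigma$ or $\kappa_{\mathrm{num}}$ recasts the problem as an assertion about the polynomial growth rate of $h^0(X_{\min}, \mathcal{O}(mK_{X_{\min}}))$ under ample perturbation, while using $\nu_{\mathrm{alg}}$ or $\nu_{\mathrm{K\ddot{a}h}}$ recasts it as the purely numerical statement that $k = \kappa(X_{\min})$ is the maximal integer with nonvanishing positive self-intersection $\langle K_{X_{\min}}^k\rangle_{\mathrm{alg}} \neq 0$. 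The desired equality $\kappa(X_{\min}) = \nu_{X_{\min}}(K_{X_{\min}})$ then amounts to matching an effective, section-based invariant with a numerical one; under the Generalized Abundance Conjecture of \cite{BDPP13} one would further expect $K_{X_{\min}}$ to admit a Zariski-type decomposition whose positive part is semiample, yielding an Iitaka-type fibration of relative dimension $n - \nu$ along which sections of pluricanonical bundles can be constructed by pullback.

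\noindent The main obstacle is precisely this last step: producing sections of $mK_{X_{\min}}$ at the polynomial rate dictated by $\nu$. The inequality $\kappa(X_{\min}) \leq \nu_{X_{\min}}(K_{X_{\min}})$ is elementary and implicit in the growth-based characterisations already recorded in Definition~\ref{ND-defs}; the deep and open direction is $\kappa \geq \nu$. This is known for $\nu = 0$ by Nakayama \cite{Nak04}, for $\nu = n$ by the Base Point Free Theorem, and in dimension three by Kawamata--Miyaoka, but the intermediate regime is precisely what remains. The value of the strategy outlined above is therefore not that it settles Conjecture~\ref{Abund-conj}, but that Theorem~\ref{numdim-eq-thm} and Proposition~\ref{bir-inv-nd-prop} reduce it to any one of a large equivalent family of statements, so that progress on the Generalized Abundance Conjecture translates directly to progress on the classical one; establishing this equivalence rigorously is what I expect the remainder of the paper to carry out.
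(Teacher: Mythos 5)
The statement you were asked about is a conjecture, and the paper does not prove it; what the paper actually establishes (Theorem~\ref{Abund-equiv-thm}) is precisely the equivalence you sketch, namely that granting a minimal model $S$ of $X$, one has $\nu_X(K_X)=\kappa_X(K_X)$ if and only if $K_S$ is semi-ample, via the birational invariance of $\nu$ (Proposition~\ref{bir-inv-nd-prop} and Corollary~\ref{can-bir-nd-cor}) and Kawamata's theorem that $\kappa_S(K_S)=\nu_S(K_S)$ iff $K_S$ is semi-ample \cite{Kaw85b}. Your proposal is honest about this and follows essentially the same route, so as an account of what can actually be proved it matches the paper.

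One point you gloss over: $X_{\min}$ is in general only $\mathbb{Q}$-factorial with terminal singularities, not smooth, so Proposition~\ref{bir-inv-nd-prop} does not literally apply to the step $\nu_Z(q^\ast K_{X_{\min}})=\nu_{X_{\min}}(K_{X_{\min}})$; the paper handles this by extending the invariance of $\nu$ under addition of exceptional divisors to the $\mathbb{Q}$-factorial case (Remark~\ref{exc-nd-rem}) and by defining $\nu_S(K_S)$ for the nef divisor $K_S$ directly through ordinary intersection numbers, then invoking the projection formula. Also note that the final inequality $\kappa\geq\nu$ on the minimal model is not attacked directly at all in the paper: it is entirely delegated to Kawamata's theorem, which converts it into the semi-ampleness statement of Conjecture~\ref{MMP-Abund-conj}, and the converse implication (existence of a minimal model from $\nu=\kappa$) is only cited from \cite{GL13}.
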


\noindent Here $\kappa(X) = \kappa_X(K_X)$ denotes the Kodaira dimension of the canonical divisor $K_X$, defined e.g. as
\[ \kappa_X(K_X) := \limsup_{m \rightarrow \infty} \frac{\log h^0(X, \mathcal{O}_X(mK_X))}{\log m}. \]
Note that  Boucksom, Demailly, P\u{a}un and Peternell refer in their Generalized Abundance Conjecture \cite[Conj.3.8]{BDPP13} to $\nu_{\mathrm{K\ddot{a}h}}(K_X)$ which is only conjecturally equal to $\nu(X)$, as discussed in \ref{nu-alg-nu-Kaeh-ineq-ssec}.

\noindent  In the context of the Minimal Model Program the Abundance Conjecture is formulated under the assumption that minimal models of smooth complex projective varieties exist (see Section~\ref{Bir-Abund-sec} for definitions):
\begin{conj}[Abundance Conjecture, MMP version {\cite[Conj.3-3-4]{Mat02}}] \label{MMP-Abund-conj}
Let $S$ be a minimal model of a non-uniruled smooth projective complex variety $X$. Then $|mK_S|$ is base point free for sufficiently divisible and large $m \in \mathbb{N}$ (that is, $K_S$ is semi-ample).
\end{conj}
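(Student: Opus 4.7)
The plan is to prove semi-ampleness of $K_S$ by stratifying the problem according to the numerical dimension $\nu(S) := \nu_S(K_S)$, which is well-defined by Theorem~\ref{numdim-eq-thm} (applied on a smooth resolution and descended via Proposition~\ref{bir-inv-nd-prop}), and which is an integer in $\{0, 1, \ldots, n\}$ with $n := \dim S$, since $K_S$ is nef on a minimal model. Stratifying by $\nu(S)$ rather than by $\kappa(S)$ is the natural move: the former is a birationally and numerically robust invariant accessible through several complementary positivity conditions, whereas the latter is the object we need to control in the end.

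In the boundary cases the result follows from classical theorems. If $\nu(S) = n$, then $\nu_{\mathrm{Vol}}(K_S) = n$ forces $\mathrm{vol}(K_S) > 0$, so $K_S$ is big and nef, and the Kawamata--Shokurov Basepoint-Free Theorem yields semi-ampleness. If $\nu(S) = 0$, then $\nu_{\mathrm{alg}}(K_S) = 0$ combined with the nefness of $K_S$ forces $K_S \equiv 0$ numerically, after which a theorem of Nakayama and Kawamata shows that some multiple $mK_S$ is trivial as a line bundle and hence semi-ample.

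For the intermediate range $0 < \nu(S) < n$, the strategy is to extract an Iitaka-type fibration from the positive part $P := P_\sigma(\phi^\ast K_S)$ of the Nakayama--Zariski decomposition on a smooth resolution $\phi : \widetilde{S} \to S$, and argue by induction on dimension. Using $\nu_{\mathrm{alg}} = \nu_{\mathrm{Vol,Zar}}$ the divisor $P$ inherits the numerical dimension $\nu(S)$; using $\nu_{\mathrm{res}}$ one identifies among subvarieties $W \not\subset \mathbb{B}_-(P)$ those of maximal dimension $\nu(S)$ on which the restricted positive intersection survives. The aim is to show that this family of $W$'s sweeps out $\widetilde{S}$ in an algebraic map $f : \widetilde{S} \dashrightarrow Y$ with $\dim Y = \nu(S)$ whose general fiber $F$ satisfies $K_F \equiv 0$; invoke abundance on $F$ inductively to get $\kappa(F) = 0$; and finally combine this with positivity of direct images $f_\ast \mathcal{O}(mP)$ in the style of Kawamata, Viehweg, Koll\'{a}r and Fujino to propagate semi-ampleness from fibers to the total space.

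The hard part is the intermediate case, and within it the crucial step is producing the honest morphism $f$: upgrading the analytic/numerical ``fibration'' implicit in the positive-product decomposition to an algebraic map whose fibers have numerically trivial canonical class is precisely the geometric content of abundance, and it resists every existing technique in dimension $\geq 4$ outside of special cases (threefold boundary, fibrations over curves, special Calabi--Yau fibrations). The multiple equivalent characterizations supplied by Theorem~\ref{numdim-eq-thm}---asymptotic section-counting ($\kappa_\sigma$, $\kappa_{\mathrm{num}}$), restricted volumes ($\nu_{\mathrm{Vol,res}}$), Seshadri-type dominance ($\kappa_\nu$), and Nakayama--Zariski positive parts ($\nu_{\mathrm{Vol,Zar}}$)---would have to be orchestrated simultaneously, combined with $L^2$-extension techniques in the spirit of Siu, Ohsawa--Takegoshi and Demailly--P\u{a}un, to convert numerical abundance into the pluricanonical sections required for semi-ampleness.
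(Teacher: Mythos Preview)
The statement you are attempting to prove is a \emph{conjecture}, not a theorem: the paper does not prove Conjecture~\ref{MMP-Abund-conj}, and indeed it remains open in dimension $\geq 4$. What the paper actually establishes is Theorem~\ref{Abund-equiv-thm}, namely that Conjecture~\ref{MMP-Abund-conj} is \emph{equivalent} to Conjecture~\ref{Abund-conj} (the equality $\nu_X(K_X) = \kappa_X(K_X)$), assuming a minimal model exists. That equivalence is deduced from Kawamata's theorem together with the birational invariance of $\nu$ and $\kappa$ (Proposition~\ref{bir-inv-nd-prop}, Corollary~\ref{can-bir-nd-cor}); it does not touch the truth of either conjecture.

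Your proposal is therefore not a proof but an outline of a possible attack on an open problem, and you yourself identify the gap honestly: in the intermediate range $0 < \nu(S) < n$ you need to produce an actual algebraic fibration $f$ with numerically trivial canonical class on the general fiber, and you concede that this ``resists every existing technique in dimension $\geq 4$''. That is exactly the missing idea, and no amount of orchestrating the equivalent definitions of $\nu$ from Theorem~\ref{numdim-eq-thm} is currently known to bridge it. The boundary cases $\nu(S)=0$ and $\nu(S)=n$ that you handle are classical and correct, but they were already known long before this paper and are not what the paper is about.
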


\noindent We use the birational invariance of the numerical dimension to show in section~\ref{Bir-Abund-sec} that the two Abundance Conjectures as stated above are equivalent:
\begin{thm}[\textbf{$=$ Theorem~\ref{Abund-equiv-thm}}]
Let $S$ be a minimal model of a non-uniruled smooth projective complex variety $X$. Then
\[ \nu_X(K_X) = \kappa_X(K_X) \Longleftrightarrow K_S\ \mathrm{is\ semi-ample}. \]
\end{thm}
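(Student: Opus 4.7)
The plan is to use the birational invariance of the numerical dimension (Proposition~\ref{bir-inv-nd-prop}) together with that of the Kodaira dimension to transport both sides of the equivalence from $X$ to a smooth birational model of $S$, and then reduce the question to the nef divisor $\pi^\ast K_S$. Let $\pi\colon\widetilde S\to S$ be a resolution of singularities. Since $S$ carries the mild singularities permitted in a minimal model, the discrepancy formula gives $K_{\widetilde S}=\pi^\ast K_S+E_\pi$ with $E_\pi\geq 0$ and $\pi$-exceptional. Choose a smooth projective variety $W$ with birational morphisms $p\colon W\to X$ and $q\colon W\to\widetilde S$ resolving the rational map between them. Since $X$ and $\widetilde S$ are both smooth, $K_W-p^\ast K_X$ and $K_W-q^\ast K_{\widetilde S}$ are effective and exceptional, so two applications of Proposition~\ref{bir-inv-nd-prop} give
\[
\nu_X(K_X) \;=\; \nu_W(K_W) \;=\; \nu_{\widetilde S}(K_{\widetilde S}),
\]
while the classical birational invariance of $\kappa$ yields $\kappa_X(K_X)=\kappa_{\widetilde S}(K_{\widetilde S})=\kappa_S(K_S)$.

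\textbf{Reduction to $\pi^\ast K_S$.} On $\widetilde S$, the decomposition $K_{\widetilde S}=\pi^\ast K_S+E_\pi$ with $\pi^\ast K_S$ nef and $E_\pi$ effective and $\pi$-exceptional is the Nakayama $\sigma$-decomposition of $K_{\widetilde S}$: the prime components of $E_\pi$ lie in the divisorial part of $\mathbb{B}_-(K_{\widetilde S})$, and Nakayama's theory~\cite{Nak04} then identifies $N_\sigma(K_{\widetilde S})$ with $E_\pi$. Effective exceptional divisors do not contribute global sections, so $\kappa_{\widetilde S}(K_{\widetilde S})=\kappa_{\widetilde S}(\pi^\ast K_S)$; and since by Theorem~\ref{numdim-eq-thm} the numerical dimension equals $\kappa_\sigma$, which depends only on the positive part of the $\sigma$-decomposition, also $\nu_{\widetilde S}(K_{\widetilde S})=\nu_{\widetilde S}(\pi^\ast K_S)$. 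The equivalence to be proved therefore reduces to
\[
\kappa_{\widetilde S}(\pi^\ast K_S)=\nu_{\widetilde S}(\pi^\ast K_S) \;\Longleftrightarrow\; K_S \text{ is semi-ample}.
\]

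\textbf{The two directions and the main obstacle.} For $(\Leftarrow)$, semi-ampleness of $K_S$ makes $\pi^\ast K_S$ semi-ample on $\widetilde S$, so both $\kappa$ and $\nu$ coincide with the dimension of the image of the associated Iitaka fibration. For $(\Rightarrow)$, the divisor $\pi^\ast K_S$ is nef on the smooth variety $\widetilde S$ and satisfies $\kappa=\nu$; invoking Kawamata's abundance criterion for nef divisors with numerical abundance yields semi-ampleness of $\pi^\ast K_S$, and this descends to $K_S$ because $\pi$ is surjective and $\pi_\ast\mathcal{O}_{\widetilde S}(m\pi^\ast K_S)=\mathcal{O}_S(mK_S)$ by the projection formula and the normality of $S$. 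The main obstacle is this last use of Kawamata's criterion, a deep abundance input; a subsidiary technical point is Nakayama's identification of $N_\sigma(K_{\widetilde S})$ with $E_\pi$, which hinges on the fact that every divisorial component of $\mathbb{B}_-(K_{\widetilde S})$ is $\pi$-exceptional.
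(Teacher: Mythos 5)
Your overall strategy coincides with the paper's: transport $\kappa$ and $\nu$ to the minimal model via birational invariance (Proposition~\ref{bir-inv-nd-prop} and Remark~\ref{exc-nd-rem}) and then invoke Kawamata's theorem. The reductions $\nu_X(K_X)=\nu_{\widetilde S}(K_{\widetilde S})=\nu_{\widetilde S}(\pi^\ast K_S)$ and $\kappa_X(K_X)=\kappa_S(K_S)$ are fine; the identification $N_\sigma(K_{\widetilde S})=E_\pi$ that you sketch is exactly the Claim established in the proof of Proposition~\ref{bir-inv-nd-prop}, so you could simply cite that result instead of re-deriving it.

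The genuine gap is in your direction $(\Rightarrow)$: there is no ``Kawamata abundance criterion'' asserting that an arbitrary nef divisor $D$ on a smooth projective variety with $\kappa(D)=\nu(D)$ is semi-ample. That statement is false; Zariski's example of a nef and big divisor on a blow-up of $\mathbb{P}^2$ (degree-zero non-torsion restriction to the strict transform of a cubic through the centers) has $\kappa=\nu=2$ but is not semi-ample. Kawamata's theorem, as quoted in the paper, applies specifically to the canonical divisor of a minimal model (more generally to adjoint divisors of klt pairs), because its proof rests on vanishing theorems; and $\pi^\ast K_S$ is not the canonical divisor of $\widetilde S$. The repair is immediate and is precisely what the paper does: since $\pi_\ast\mathcal{O}_{\widetilde S}=\mathcal{O}_S$ one has $\kappa_{\widetilde S}(\pi^\ast K_S)=\kappa_S(K_S)$, and the projection formula gives $\nu_{\widetilde S}(\pi^\ast K_S)=\max\{k\,:\,K_S^k\not\equiv 0\}=\nu_S(K_S)$; now apply Kawamata's theorem on $S$ itself, where $K_S$ genuinely is the canonical divisor of a $\mathbb{Q}$-factorial terminal minimal model. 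With that substitution (which also makes your final descent step superfluous) your argument closes and agrees with the paper's proof.
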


\noindent Note that this equivalence is asserted in passing in \cite{DHP13}) and proven in all detail in \cite[Thm.4.3]{GL13}, using results of \cite{Lai11}. Relying on \cite{BCHMcK10}, Gongyo and Lehmann were even able to show that $\nu_X(K_X) = \kappa_X(K_X)$ already implies the existence of a minimal model. 

\noindent However, the author still thinks that it is worth presenting the argument for Thm.~\ref{Abund-equiv-thm}, emphasizing in particular that not all the possible definitions of numerical dimension are easily shown to be birationally invariant.

\vspace{0.1cm}

\noindent \textit{Acknowledgements.} The author would like to thank the anonymous referees for finding and closing a gap in the proof of $\kappa_\nu(D) \leq \nu_{\mathrm{alg}}(D)$, and for the suggestions to improve the presentation of the paper, in particular for clarifying the newest developments on the equivalent formulations of the Abundance Conjecture.

\section{Notions of numerical dimension} \label{ND-defs-sec}

\noindent In the following $X$ is always a smooth $n$-dimensional complex projective variety and $D$ a pseudoeffective $\mathbb{R}$-divisor on $X$. 

\subsection{$\nu_{\mathrm{K\ddot{a}h}}(D)$}
This notion is defined in \cite[Def.3.6]{BDPP13}. The \textit{moving intersection product} $\langle [D]^k \rangle_{\mathrm{K\ddot{a}h}}$ of the $(1,1)$-cohomology class of the integration current $[D]$ is constructed in \cite[Thm.3.5]{BDPP13} following \cite{Bou02}: For suitably chosen birational morphisms $\mu_m: X_m \rightarrow X$ of smooth complex varieties, real numbers $\delta_m \downarrow 0$, closed semi-positive forms $\beta_{i,m}$ representing big and nef classes in $N^1(X_m)$, effective $\mu_m$-exceptional $\mathbb{Q}$-divisors $E_{i,m}$ on $X_m$ and any ample class $\omega$ on $X$ such that $[E_{i,m}] + \beta_{i,m}$ represents the $(1,1)$-class $\left( \mu_m \right)^\ast \left( [D] + \delta_m \omega \right)$ we can set
\[ \langle [D]^k \rangle_{\mathrm{K\ddot{a}h}} := \lim_{m \rightarrow \infty} \left( \mu_m \right)_\ast \left( \left[ \beta_{1,m} \wedge \ldots \wedge \beta_{k,m} \right] \right), \] 
where the limits are taken in $H^{k,k}(X)$. Note that other choices of $\mu_m, \delta_m, \beta_{i,m}, E_{i,m}$ satisfying the properties above will yield "smaller" $(k,k)$-classes $\alpha$, that is, $\langle [D]^k \rangle_{\mathrm{K\ddot{a}h}} - \alpha$ is represented by a positive current.

\subsection{$\nu_{\mathrm{alg}}(D)$} \label{nu-alg-ssec}
This notion appears first in \cite{Leh13} where it is nevertheless attributed to \cite{BDPP13}. In fact, Lehmann uses the algebraic analogue of the moving intersection product $\langle [D]^k \rangle_{\mathrm{K\ddot{a}h}}$ as defined in \cite{BFJ09}: To calculate $\langle [D]^k \rangle_{\mathrm{alg}}$ Boucksom, Favre and Jonsson replace the $(k,k)$-cohomology class $\left[ \beta_{1,m} \wedge \ldots \wedge \beta_{k,m} \right] \in H^{k,k}(X_m)$ by the intersection $k$-cycle class $[\beta_{1,m}] \cdots [\beta_{k,m}] \in N^k(X_m)$ and take the limit in $N^k(X)$. 

\noindent In particular, $\langle [D]^k \rangle_{\mathrm{alg}}$ only depends on the numerical class of $D$. The connection to $\langle [D]^k \rangle_{\mathrm{K\ddot{a}h}}$ is discussed in~\ref{nu-alg-nu-Kaeh-ineq-ssec}.

\noindent Note that the moving intersection product is continuous and homogeneous on the cone spanned by the classes of big divisors (\cite[Prop.2.9]{BFJ09}). Furthermore it coincides with the usual intersection number if the numerical classes are represented by nef divisors \cite[Prop.2.12]{BFJ09}.

\subsection{$\nu_{\mathrm{res}}(D)$} \label{nu-res-ssec}
This notion is defined in \cite{Leh13}. The \textit{diminished} or \textit{restricted base locus} of an $\mathbb{R}$-divisor 
\[ \mathbb{B}_{-}(D) := \bigcup_{\tiny \begin{array}{c} A\ \mathrm{ample}\\ D+A\ \mathbb{Q}-\mathrm{divisor} \end{array}} \mathbb{B}(D+A) \]
appears in \cite[Def.1.12]{ELM+06} and \cite[Def.III.2.6{\&}p.168]{Nak04}. Here,
\[ \mathbb{B}(D+A) := \bigcap_{m \geq 1} \mathrm{Bs}(\lfloor m(D+A) \rfloor) \] 
is the \textit{stable base locus} of the $\mathbb{Q}$-divisor $D+A$. Later on, we also need the \textit{augmented base locus}
\[ \mathbb{B}_+(B) := \bigcap_{\tiny \begin{array}{c} A\ \mathrm{ample}\\ B - A\ \mathbb{Q}-\mathrm{divisor} \end{array}} \mathbb{B}(B - A) \]
of a big $\mathbb{R}$-divisor $B$ (see~\cite[Def.1.2]{ELM+06}). Note that both base loci only depend on the numerical class of $D$ resp.\ $B$  (see~\cite[Prop.1.15]{ELM+06}).

\noindent The \textit{restricted moving intersection} $\langle D^k \rangle_{X|W}$ is constructed in \cite{BFJ09} for divisors $W$ and generalized to arbitrary subvarieties $W \not\subset \mathbb{B}_{-}(D)$ of dimension at least $k$ in \cite[Def.4.8]{Leh13} (then $D$ is called a \textit{$W$-pseudoeffective divisor}): Similar to \ref{nu-alg-ssec},
\[ \langle D^k \rangle_{X|W} := \lim_{m \rightarrow \infty} \left( \mu_m \right)_\ast ([B_{1,m}] \cdots [B_{k,m}] \cdot \widetilde{W}), \]
where the $B_{i,m}$ are suitably chosen big and nef divisors on the smooth variety $X_m$ such that $\mu_m: X_m \rightarrow X$ is a birational morphism whose center does not contain $W$ (a so-called $W$-\textit{birational model} of $X$), the $\mathbb{Q}$-divisors $\mu_m^\ast(D+\delta_mA) - B_{i,m}$ are effective and $\mu_m$-exceptional for a fixed ample divisor $A$ on $X$ and real numbers $\delta_m \downarrow 0$, and $\widetilde{W}$ is the strict $\mu_m$-transform of $W$. 

\noindent $\langle D^k \rangle_{X|W}$ only depends on the numerical class of $D$. On the cone spanned by classes of big divisors $B$ such that $W \not\subset \mathbb{B}_+(B)$ (then $B$ is called a \textit{$W$-big divisor}), the restricted product is continuous and homogeneous (see \cite[Prop.2.9\&Prop.4.6]{BFJ09} resp.\ \cite[Prop.4.7]{Leh13}). This implies furthermore that
\[ \langle D^k \rangle_{X|W} = \lim_{\delta \downarrow 0} \langle (D + B^{(\delta)})^k \rangle_{X|W}, \]
for arbitrary $W$-big divisors $B^{(\delta)}$ converging to $0$ when $\delta \downarrow 0$.

\noindent Note also that by setting $W := X$ the moving intersection cycle class $\langle D^k \rangle_{\mathrm{alg}}$ can be obtained as a special case of the restricted moving intersection product. Finally, in the calculation of $\langle D^k \rangle_{X|W}$ one can choose $B_{1,m} = \cdots = B_{k,m}$ (see the proof of \cite[Lem.2.6]{BFJ09}). 

\subsection{$\nu_{\mathrm{Vol}}(D)$} 
This notion is defined in \cite{Leh13}. Note that the volume of the big $\mathbb{R}$-divisor $D+tA$ can be defined as
\[ \mathrm{vol}(D+tA) := \limsup_{m \rightarrow \infty} \frac{h^0(X, \mathcal{O}_X(\lfloor m(D+tA)\rfloor))}{m^n} \]
because this definition coincides with the one in \cite[\S 2.2.C]{LazPAG1} as the continuous extension of the volume function on $\mathbb{Q}$-divisors to the big cone.

\noindent Fujita's theorem \cite[Thm.3.1]{BFJ09} states that $\mathrm{vol}(D+tA) = \langle (D+tA)^n \rangle_{\mathrm{alg}}$.

\subsection{$\nu_{\mathrm{Vol, res}}(D)$} \label{Vol-res-ssec}
This notion is introduced in \cite{Leh13} and uses the restricted volume investigated in \cite{ELM+09} (see also \cite[Def.2.12]{Leh13} for the definition):
\[ \mathrm{vol}_{X|W}(D+\epsilon A) := \limsup_{m \rightarrow \infty} \frac{h^0(X|W, \mathcal{O}_X(\lfloor m(D+\epsilon A) \rfloor))}{m^{\dim W}/(\dim W)!}, \]
where $H^0(X|W, \mathcal{O}_X(\lfloor m(D+\epsilon A) \rfloor))$ is defined as 
\[ \mathrm{Im}(H^0(X, \mathcal{O}_X(\lfloor m(D+\epsilon A) \rfloor)) \rightarrow H^0(W, \mathcal{O}_W(\lfloor m(D+\epsilon A) \rfloor))). \]
By the Generalised Fujita Theorem \cite[Prop.2.11\&Thm.2.13]{ELM+09},
\[ \langle (D+\epsilon A)^{\dim W} \rangle_{X|W} = \mathrm{vol}_{X|W}(D+\epsilon A). \]
Consequently, the restricted volume only depends on the numerical class of $D$ and is continuous and homogeneous on the cone spanned by the classes of $W$-big divisors~$B$.

\subsection{$\nu_{\mathrm{Vol, Zar}}(D)$} \label{nu-vol-zar-ssec}
Again this notion is introduced in \cite{Leh13}. Note that morphisms $\phi: (\widetilde{X}, \widetilde{W}) \rightarrow (X,W)$ are $W$-birational if the irreducible subvariety $W \subset X$ is not contained in the center of the birational map $\phi$, and $\widetilde{W}$ is the strict $\phi$-transform of $W$. The \textit{divisorial Zariski decomposition} or \textit{$\sigma$-decomposition}
\[ \phi^\ast D = P_\sigma(\phi^\ast D) + N_\sigma(\phi^\ast D) \]
into a positive part $P_\sigma$ and a negative part $N_\sigma$ is constructed by Nakayama \cite[III.1]{Nak04} and \cite{Bou04}.
Lehmann extracted from Nakayama's results in \cite[III.1]{Nak04} that the negative part $N_\sigma(\phi^\ast D)$ is the divisorial part of the diminished base locus $\mathbb{B}_{-}(\phi^\ast D)$ \cite[Prop.3.3(3)]{Leh13}, whereas Nakayama \cite[Lem.III.1.14(1)]{Nak04} showed that the numerical class of $P_\sigma(\phi^\ast D)$ lies in the closure of the movable cone $\mathrm{Mov}(X)$ spanned by fixed-part free divisors.

\noindent For later purposes we need more details of Nakayama's construction of the \mbox{$\sigma$-decomposition}:
\begin{Def} \label{ZD-def}
Let $X$ be a smooth projective complex variety, $B$ a big $\mathbb{R}$-divsor and $\Gamma$ a prime divisor on $X$. We set
\[ \sigma_\Gamma(B) := \inf \{ \mathrm{mult}_\Gamma \Delta | \Delta \equiv B, \Delta \geq 0 \}. \]
If $D$ is a pseudoeffective $\mathbb{R}$-divisor and $A$ an ample divisor on $X$ we set
\[ \sigma_\Gamma(D) := \lim_{\epsilon \downarrow 0} \sigma_\Gamma(D + \epsilon A) \] 
and define
\[ N_\sigma(D) := \sum_\Gamma \sigma_\Gamma(D) \cdot \Gamma. \]
\end{Def}

\noindent The well-definedness of $\sigma_\Gamma(D)$ is shown in \cite[III.1.5]{Nak04}; by \cite[III.1.11]{Nak04} $N_\sigma(D)$ is a finite sum.

\subsection{$\kappa_\sigma(D)$}
This notion is defined in \cite[Def.V.2.5]{Nak04}.

\subsection{$\kappa_{\mathrm{alg}}(D)$}
This notion is defined in \cite{Siu11}.

\subsection{$\kappa_\nu(D)$} \label{kappa-nu-ssec}
This notion is defined in \cite[Def.V.2.20]{Nak04}, requiring the notion of \textit{numerical dominance}:
\begin{Def}[{\cite[Def.V.2.12{\&}V.2.16]{Nak04}}]
Let $D$ be an $\mathbb{R}$-divisor on a smooth projective variety $X$ and $W \subset X$ an irreducible subvariety. We say that $D$ dominates $W$ numerically and write $D \succeq W$ if there exists a birational morphism $\phi: \widetilde{X} \rightarrow X$ and an ample divisor $A$ on $\widetilde{X}$ such that $\phi^{-1} \mathcal{I}_W \cdot \mathcal{O}_{\widetilde{X}} = \mathcal{O}_{\widetilde{X}}(E_W)$ is the locally free sheaf of an effective divisor $E_W$ on $\widetilde{X}$, and for every real number $b > 0$ there exist real numbers $x>b, y>b$ such that
\[ x \cdot \phi^\ast D - y \cdot E_W + A \]
is pseudoeffective.
\end{Def}

\noindent Note that the condition above is satisfied for any birational morphism $\psi: Y \rightarrow X$ with $\psi^{-1} \mathcal{I}_W \cdot \mathcal{O}_Y = \mathcal{O}_Y(F_W)$ for an effective divisor $F_W$ and ample divisor $B$ once it is satisfied for $\phi$ and $A$.

\subsection{$\kappa_{\nu, \mathrm{Leh}}(D)$}
This notion is introduced in \cite{Leh13} using \cite[Def.5.1]{Leh13}. See the discussion of the inequality $\kappa_{\nu, \mathrm{Leh}}(D) \leq \kappa_{\nu}(D)$ in \ref{k_nu_Leh_k_nu-ineq-ssec} for why the two invariants may be different.

\section{Inequalities between notions of numerical dimension} \label{ineq-ND-sec}

\subsection{$\nu_{\mathrm{alg}}(D) = \nu_{\mathrm{K\ddot{a}h}}(D)$} \label{nu-alg-nu-Kaeh-ineq-ssec}
The inequality $\nu_{\mathrm{alg}}(D) \leq \nu_{\mathrm{K\ddot{a}h}}(D)$ holds because $k$-cycles are numerically equivalent if the corresponding integration currents are cohomologically equivalent. 

\noindent Vice versa, $(k,k)$-classes $\left( \mu_m \right)_\ast \left( \left[ \beta_{1,m} \wedge \ldots \wedge \beta_{k,m} \right] \right)$ whose limit calculates $\langle [D]^k \rangle_{\mathrm{K\ddot{a}h}}$, correspond by construction to numerical $k$-cycle classes $\left( \mu_m \right)_\ast \left( [\beta_{1,m}] \cdots [\beta_{k,m}]\right) \leq \langle [D]^k \rangle_{\mathrm{alg}}$. So the numerical class $\alpha$ corresponding to $\langle [D]^k \rangle_{\mathrm{K\ddot{a}h}}$ is $\leq \langle [D]^k \rangle_{\mathrm{alg}}$.

\noindent Since both numerical classes are pseudoeffective $\langle [D]^k \rangle_{\mathrm{alg}} = 0$ implies $\alpha = 0$. Hence $\alpha \cdot H^{n-k} = 0$ for an ample divisor $H$ on $X$, and for a positive $(k,k)$-current $T$ representing $\langle [D]^k \rangle_{\mathrm{K\ddot{a}h}}$ we have $\int_T \omega_H^{n-k} = 0$ where $\omega_H$ is the K\"ahler form associated to $H$. But this is only possible if $T = 0$, that is $\langle [D]^k \rangle_{\mathrm{K\ddot{a}h}} = 0$.
The inequality $\nu_{\mathrm{K\ddot{a}h}}(D) \leq \nu_{\mathrm{alg}}(D)$ follows.

\subsection{$\nu_{\mathrm{alg}}(D) \leq \nu_{\mathrm{Vol}}(D)$} 
This inequality is proven in \cite[Thm.6.2.(1){$=$}(7)]{Leh13}.

\subsection{$\nu_{\mathrm{alg}}(D) \leq \nu_{\mathrm{res}}(D)$}
This inequality  is proven in \cite[Thm.6.2.(1){$=$}(2)]{Leh13}.

\subsection{$\nu_{\mathrm{res}}(D) \leq \nu_{\mathrm{Vol, res}}(D)$}
This inequality  is proven in \cite[Thm.6.2.(2){$=$}(3)]{Leh13}.

\subsection{$\nu_{\mathrm{Vol, res}}(D) \leq \nu_{\mathrm{Vol, Zar}}(D)$}
This inequality  is proven in \cite[Thm.6.2.(3){$=$}(4)]{Leh13}.

\subsection{$\nu_{\mathrm{Vol, Zar}}(D) \leq \kappa_{\sigma}(D)$}
This inequality  is proven in \cite[Thm.6.2.(4){$\leq$}(5)]{Leh13}.

\subsection{$\kappa_{\sigma}(D) \leq \kappa_{\nu}(D)$}
This inequality is proven in \cite[Prop.V.2.22(1)]{Nak04}.

\subsection{$\nu_{\mathrm{Vol, Zar}}(D) \leq \kappa_{\mathrm{num}}(D)$}
For a sufficiently ample divisor $A$ Lehmann shows in \cite[Thm.6.2.(4){$\leq$}(5)]{Leh13} that there exists a constant $C > 0$ so that for every suffciently large $m$
\[ C m^{\nu_{\mathrm{Vol, Zar}}(D)} \leq h^0(X, \mathcal{O}_X(\lfloor mD \rfloor + A)). \]
Taking the logarithm, dividing by $\log m$ and letting $m$ tend to $\infty$ shows the desired inequality.

\subsection{$\kappa_{\nu, \mathrm{Leh}}(D) \leq \kappa_\nu(D)$} \label{k_nu_Leh_k_nu-ineq-ssec}
Let $W \subset X$ be an irreducible subvariety, $\phi: \widetilde{X} \rightarrow X$ a birational morphism of smooth varieties such that $\mathcal{O}_{\widetilde{X}}(E_W) = \phi^{-1} \mathcal{I}_W \cdot \mathcal{O}_{\widetilde{X}}$ and $A$ an ample divisor on $\widetilde{X}$. If $\phi^\ast D - \epsilon E_W$ is pseudoeffective for an $\epsilon > 0$ then 
\[ \frac{b+1}{\epsilon} \phi^\ast D - (b+1) E_W + A\] 
is also pseudoeffective, for any $b > 0$, hence $D \succeq W$. Consequently, 
\[ \kappa_{\nu, \mathrm{Leh}}(D) \leq \kappa_{\nu}(D). \]
Note that the argument for equality in the proof of \cite[Prop.5.3]{Leh13} does not work because projections of finite-dimensional vector spaces are not closed maps. In particular equality could fail if $\phi^\ast D$ sits on a non-polyhedral part of the boundary of the big cone $\mathrm{Big}(\widetilde{X})$, as illustrated in the following diagram of a cut through the big cone by the affine plane in $\mathrm{NS}(\widetilde{X})_{\mathbb{R}}$ passing through $E_W$, $\phi^\ast D$ and $E_W - \frac{1}{b+1}A$, for arbitrary $b > 0$:

\vspace{0.2cm}

\begin{center}
\begin{tikzpicture}
   \draw (-2,0) -- (3,0);
   \draw[thick] (-2,0) -- (0,0);
   \draw[thick] (0,0) .. controls (1,0) and (3,-1) .. (3,-2);
   \fill[gray] (-2,-2)  -- (-2,0) -- (0,0) .. controls (1,0) and (3,-1) .. (3,-2) -- cycle; 
   \draw (2.6 ,-1.75) -- (3.3,-1.75);
   \node at (4,-1.8) {$\mathrm{Big}(\widetilde{X})$};

   \filldraw (-1.5,0) circle (1pt); \node at (-1.5,0.2) {\small $E_W$};
   \filldraw (-1.5, 0.75) circle (1pt); \node at (-0.5,0.8) {\small $E_W - \frac{1}{b+1}A$};
   \filldraw (0,0) circle (1pt); \node at (0.2, 0.2) {\small $\phi^\ast D$}; 
   \filldraw (1.5,0) circle (1pt); \node at (2.7,0.2) {\small $\frac{1}{1-\epsilon}(\phi^\ast D - \epsilon E_W)$};
   \filldraw (1.5, -0.75) circle (1pt); 
   \node at (4.5, -0.7) {\small $\frac{1}{1-\epsilon}(\phi^\ast D - \epsilon E_W + \frac{\epsilon}{b+1}A)$};
   \draw (2.6,-0.7) -- (1.6,-0.72);

   \draw (-2, 1) -- (3,-1.5);
\end{tikzpicture}
\end{center}

\vspace{0.2cm}

\noindent In this situation, $\phi^\ast D - \epsilon E_W$ is not pseudoeffective for all $\epsilon > 0$, but $\phi^\ast D - \epsilon E_W + \frac{\epsilon}{b+1}A$ is pseudoeffective for all sufficiently small $\epsilon > 0$. Consequently, $\frac{b+1}{\epsilon}\phi^\ast D -  (b+1)E_W + A$ is pseudoeffective, hence $D \succeq W$.

\noindent Note also that Nakayama's proof of $\kappa_\sigma(D) \leq \kappa_\nu(D)$ does not work if we replace $\kappa_\nu(D)$ with $\kappa_{\nu, \mathrm{Leh}}(D)$: The definition of $\kappa_\sigma(D)$ only allows one to find sections of $\mathcal{O}_X(\lfloor mD \rfloor + A)$, with the ample divisor $A$ on $X$ added.

\subsection{$\kappa_{\mathrm{num}}(D) \leq \kappa_\nu(D)$}
We adapt \cite[Thm.6.7(7)]{Leh13} and its proof to Nakayama's definition of $\kappa_\nu(D)$: Let $A$ be a sufficiently ample divisor on $X$, and let $W \subset X$ be a subvariety such that $\dim W = \kappa_\nu(D)$ and $D \not\succeq W$. In particular, for a resolution $\phi: \widetilde{X} \rightarrow X$ of $W$ and an ample divisor $H$ on the smooth projective variety $\widetilde{X}$, there exists $b > 0$ such that $x\phi^\ast D - y E_W + H$ is not pseudoeffective for any choice of $x, y > b$.

\noindent Choose $q \in \mathbb{N}$ large enough so that $qH - \phi^\ast A$ is pseudoeffective, and consider any sufficiently large $m \in \mathbb{N}$. Then the $\mathbb{R}$-divisor $m\phi^\ast D - q \lceil b+1 \rceil E_W + qH$ and hence 
\[ m\phi^\ast D - q \lceil b+1 \rceil E_W + qH - (qH - \phi^\ast A) = \phi^\ast (mD + A) - q \lceil b+1 \rceil E_W \]
is not pseudoeffective. Therefore $\phi^\ast (\lfloor mD \rfloor + A) - q \lceil b+1 \rceil E_W$ is not effective, and we obtain
\[ h^0(\widetilde{X}, \mathcal{O}_{\widetilde{X}}(\phi^\ast (\lfloor mD \rfloor + A) - q \lceil b+1 \rceil E_W)) = 0. \]
Consequently, $h^0(X, \mathcal{O}_X(\lfloor mD \rfloor + A) \otimes \mathcal{I}_W^{q \lceil b+1 \rceil}) = 0$. Set $q^\prime = q \lceil b+1 \rceil$ and write $W_{q^\prime}$ for the subscheme defined by the ideal sheaf $\mathcal{I}_W^{q^\prime}$. Then there is an injection
\[ H^0(X, \mathcal{O}_X(\lfloor mD \rfloor + A)) \rightarrow H^0(W_{q^\prime}, \mathcal{O}_{W_{q^\prime}}(\lfloor mD \rfloor + A)). \]
Since $\lfloor mD \rfloor + A \leq m\lfloor D + A\rfloor$ the rate of growth for the right hand side is bounded by a multiple of $m^{\dim W_{q^\prime}} = m^{\kappa_\nu(D)}$. In particular, there exists a constant $C > 0$ such that
\[ h^0(X, \mathcal{O}_X(\lfloor mD \rfloor + A)) \leq C \cdot m^{\kappa_\nu(D)}. \]
Taking the logarithm, dividing by $\log m$ and letting $m$ tend to $\infty$ shows the desired inequality.

\section{Proof of $\kappa_\nu(D) \leq \nu_{\mathrm{alg}}(D)$} \label{kn-nalg-sec}

\noindent To show this inequality we cannot just adapt the proof of \cite[Thm.6.2(6) {$\leq$} (1)]{Leh13} to Nakayama's definition of $\kappa_\nu(D)$ but need a new ingredient: the derivative of the restricted volume function. The following statement generalizes Thm.A in \cite{BFJ09}.
\begin{thm} \label{der-resvol-thm} 
Let $X$ be a $n$-dimensional smooth projective complex variety and $V = H_1 \cap \ldots \cap H_{n-k}$ a $k$-dimensional complete intersection variety cut out by very general very ample  linearly equivalent divisors $H_i$. If $\alpha$ is a $V$-big  and $\gamma$ an arbitrary divisor class then
\[ \frac{d}{dt}_{|t=0} \mathrm{vol}_{X|V}(\alpha + t\gamma) = k \cdot \langle \alpha^{k-1} \rangle_{X|V} \cdot \gamma. \]
\end{thm}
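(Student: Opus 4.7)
The plan is to adapt the proof of \cite[Thm.A]{BFJ09} to the restricted setting, using that $\mathrm{vol}_{X|V}$ and $\langle\cdot\rangle_{X|V}$ enjoy on the $V$-big cone the same formal properties (continuity, homogeneity, Generalised Fujita) as their unrestricted counterparts on the big cone. By continuity and homogeneity of both sides (see~\ref{nu-res-ssec} and~\ref{Vol-res-ssec}, following \cite[Prop.2.9\&Prop.4.6]{BFJ09} and \cite[Prop.4.7]{Leh13}), I may assume $\alpha$ and $\gamma$ to be rational. The Generalised Fujita Theorem recalled in~\ref{Vol-res-ssec} rewrites the left-hand side as $\frac{d}{dt}_{|t=0}\langle(\alpha+t\gamma)^k\rangle_{X|V}$, so the task is to analyze the restricted positive product along the segment $\alpha+t\gamma$ near $t=0$.

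As a model case, if $\alpha$ is represented by an ample $\mathbb{Q}$-divisor and the $H_i$ are chosen sufficiently general, then for $|t|$ small enough $\alpha+t\gamma$ remains ample, both $\mathrm{vol}_{X|V}(\alpha+t\gamma) = (\alpha+t\gamma)^k\cdot V$ and $\langle\alpha^{k-1}\rangle_{X|V}\cdot\gamma = (\alpha^{k-1}\cdot V\cdot\gamma)$ are genuine intersection numbers on $X$, and the identity reduces to the elementary polynomial derivative of $(\alpha+t\gamma)^k\cdot V$ at $t=0$.

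For a general $V$-big class $\alpha$, I would pass to restricted Fujita approximations as in~\ref{nu-res-ssec}: for a fixed ample divisor $A_0$ on $X$ and each $\epsilon>0$, choose a $V$-birational model $\mu_\epsilon: X_\epsilon\to X$, a real number $\delta_\epsilon>0$ and a decomposition $\mu_\epsilon^\ast(\alpha+\delta_\epsilon A_0) = B_\epsilon + E_\epsilon$ with $B_\epsilon$ nef and big, $E_\epsilon$ effective and $\mu_\epsilon$-exceptional, and $\delta_\epsilon\downarrow 0$, such that $B_\epsilon^k\cdot\widetilde{V}_\epsilon\to\mathrm{vol}_{X|V}(\alpha)$, where $\widetilde{V}_\epsilon$ denotes the strict $\mu_\epsilon$-transform of $V$. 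The main obstacle will be to establish a restricted orthogonality estimate of the form $B_\epsilon^{k-1}\cdot E_\epsilon\cdot\widetilde{V}_\epsilon\to 0$, extending \cite[Thm.3.12]{BFJ09} (cf.\ also \cite[\S 4.3]{BDPP13}) to the restricted setting. The very general choice of the $H_i$ is essential at this step: it ensures that $\widetilde{V}_\epsilon$ avoids the components of $E_\epsilon$ that would otherwise obstruct the estimate and meets $B_\epsilon$ transversally, so that the Khovanskii--Teissier inequalities can be applied to the three-factor intersection number on $\widetilde{V}_\epsilon$.

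Once orthogonality is in place, the concavity of $t\mapsto\mathrm{vol}_{X|V}(\alpha+t\gamma)^{1/k}$ on the $V$-big cone, which follows from a restricted Brunn--Minkowski inequality in the spirit of \cite{ELM+09}, provides automatic one-sided differentiability at the interior point $t=0$. Applying the model-case derivative formula to $\mathrm{vol}_{X_\epsilon|\widetilde{V}_\epsilon}(B_\epsilon+t\mu_\epsilon^\ast\gamma)$ at each finite stage, squeezing the one-sided derivatives of $\mathrm{vol}_{X|V}(\alpha+t\gamma)$ at $t=0$ between these approximating quantities, and passing to the limit $\epsilon\to 0$ with the help of the orthogonality estimate, identifies both one-sided derivatives with $k\,\langle\alpha^{k-1}\rangle_{X|V}\cdot\gamma$, as required.
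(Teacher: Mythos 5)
Your overall strategy---reduce to the template of \cite[Thm.A]{BFJ09} and squeeze the derivative of $\mathrm{vol}_{X|V}$ between polynomial approximations coming from Fujita-type decompositions---is the right one, but you route the argument through two ingredients that you do not establish and that the paper's proof deliberately avoids. The first is the restricted orthogonality estimate $B_\epsilon^{k-1}\cdot E_\epsilon\cdot\widetilde{V}_\epsilon\to 0$, which you yourself flag as ``the main obstacle'' and then dispose of by claiming that a very general choice of the $H_i$ makes $\widetilde{V}_\epsilon$ avoid the components of $E_\epsilon$. This is false for $k\geq 2$: the components of the $\mu_\epsilon$-exceptional divisor $E_\epsilon$ lie over centers of codimension at least $2$ in $X$, and a $k$-dimensional very general complete intersection cannot avoid a codimension-$2$ subvariety, so $\widetilde{V}_\epsilon$ does meet $E_\epsilon$ and the three-fold intersection number has no reason to be small for positional reasons; an orthogonality bound of this kind needs a genuine volume-theoretic argument (compare \cite{BDPP13}). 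Without it, your key limit $B_\epsilon^{k-1}\cdot\mu_\epsilon^\ast\gamma\cdot\widetilde{V}_\epsilon\to\langle\alpha^{k-1}\rangle_{X|V}\cdot\gamma$ is unjustified, since Fujita approximation only guarantees convergence of the top self-intersection $B_\epsilon^{k}\cdot\widetilde{V}_\epsilon$. The second unproven ingredient is the concavity of $t\mapsto\mathrm{vol}_{X|V}(\alpha+t\gamma)^{1/k}$, which you invoke for one-sided differentiability but assert only ``in the spirit of'' \cite{ELM+09}, without a precise statement or proof. There is also a quantifier issue: the models $\mu_\epsilon$ must be fixed as a countable family \emph{before} $V$ is chosen very general, which is exactly what Lemma~\ref{res-free-lem} arranges.

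The paper's proof needs neither orthogonality nor concavity. It establishes two restricted analogues directly: Proposition~\ref{resvol-diff-prop}, the estimate $\mathrm{vol}_{X|V}(A-B)\geq A^k\cdot H^{n-k}-k\cdot A^{k-1}\cdot B\cdot H^{n-k}$ for $V$-big and nef classes, proved via a commutative diagram of restriction maps on spaces of sections (generalizing \cite[Thm.2.2.15]{LazPAG1}) combined with the slicing Lemma~\ref{res-free-lem}; and Proposition~\ref{resvol-expand-prop}, the sub-linear expansion $\mathrm{vol}_{X|V}(B+t\gamma)\geq B^k\cdot H^{n-k}+k\cdot t\cdot B^{k-1}\cdot\gamma\cdot H^{n-k}-c\cdot t^2$ with $c$ depending only on $H^n$. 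Taking the supremum of the latter over nef classes $\beta\leq_{\widetilde{V}}\phi^\ast\alpha$ on $V$-birational models yields $\mathrm{vol}_{X|V}(\alpha+t\gamma)\geq\mathrm{vol}_{X|V}(\alpha)+k\cdot t\cdot\langle\alpha^{k-1}\rangle_{X|V}\cdot\gamma-c\cdot t^2$, and the matching upper bound comes from exchanging the roles of $\alpha$ and $\alpha+t\gamma$ and using continuity of the restricted positive product. To salvage your route you would have to supply complete proofs of the restricted orthogonality estimate and of the restricted log-concavity; as written, the argument has a genuine gap at its central step.
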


\noindent To prove this theorem and the inequality we first need further facts on the restricted moving intersection product and volume.
\begin{lem} \label{res-free-lem}
Let $X$ be a smooth projective complex variety, $V \subset X$ a subvariety and $D$ a $V$-pseudoeffective divisor on $X$. Furthermore, let $F \subset X$ be a very general element of a free family of subvarieties, that is, a general element of the family intersects any given algebraic subset of $X$ in the expected codimension. Then for $k \leq \dim V \cap F$:
\[ \langle D^k \rangle_{X|V} \cdot F = \langle D^k \rangle_{X|V \cap F}. \]
\end{lem}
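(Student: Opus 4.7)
The plan is to compute both sides from a common sequence of birational models, using the free-family hypothesis to promote $V$-birational data to $(V \cap F)$-birational data and to control the intersection cycles on the blowup. Fix a sequence $\mu_m \colon X_m \to X$ of $V$-birational morphisms, numbers $\delta_m \downarrow 0$, and big-and-nef divisors $B_{i,m}$ on $X_m$ with $\mu_m^\ast(D + \delta_m A) - B_{i,m}$ effective and $\mu_m$-exceptional, so that this data computes $\langle D^k \rangle_{X|V}$ as in \ref{nu-res-ssec}. Let $Z_m \subset X$ be the center of $\mu_m$. Since $V \not\subset Z_m$, $\dim(V \cap Z_m) < \dim V$; for $F$ very general in the free family---imposing countably many generic conditions, one for each $m$ and each relevant auxiliary subvariety---we may arrange simultaneously that $(V \cap Z_m) \cap F$ has expected (hence strictly smaller than $\dim(V \cap F)$) dimension, and that $F$ meets the strata of $\mu_m$ in expected codimension. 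In particular $V \cap F \not\subset Z_m$, so $\mu_m$ is also $(V \cap F)$-birational; since the conditions on the $B_{i,m}$ are insensitive to the restricting subvariety, the same sequence $(\mu_m, B_{i,m}, \delta_m)$ computes $\langle D^k \rangle_{X | V \cap F}$.

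\textbf{Computation.} By the projection formula in $N^\bullet$,
\[ \langle D^k \rangle_{X|V} \cdot F = \lim_{m \to \infty} (\mu_m)_\ast \bigl( [B_{1,m}] \cdots [B_{k,m}] \cdot [\widetilde{V}] \cdot \mu_m^\ast [F] \bigr). \]
Because $F \not\subset Z_m$, $\mu_m^\ast [F] = [\widetilde{F}]$ in $N^\bullet(X_m)$. Any top-dimensional component of the intersection cycle $\widetilde{V} \cap \widetilde{F}$ different from the strict transform $\widetilde{V \cap F}$ would be exceptional, hence would map via $\mu_m$ into $V \cap F \cap Z_m$, which has strictly smaller dimension than $\dim(V \cap F)$ by the setup---a contradiction with the dimension of the component. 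Therefore $[\widetilde{V}] \cdot [\widetilde{F}] = [\widetilde{V \cap F}]$, and substitution gives
\[ \langle D^k \rangle_{X|V} \cdot F = \lim_{m \to \infty} (\mu_m)_\ast \bigl( [B_{1,m}] \cdots [B_{k,m}] \cdot [\widetilde{V \cap F}] \bigr) = \langle D^k \rangle_{X | V \cap F}. \]

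\textbf{Main obstacle.} The crucial point---and the only place where freeness is genuinely used---is ruling out exceptional components in $[\widetilde{V}] \cdot [\widetilde{F}]$. The dimension inequality $\dim(V \cap F \cap Z_m) < \dim(V \cap F)$ is precisely what freeness delivers, and it must be arranged uniformly in $m$ through the very generality of $F$ (a countable intersection of generic conditions). Without this control, cycle-theoretic intersection on $X_m$ could contribute spurious exceptional pieces and break the matching of the two limits. The remaining ingredients---the projection formula, passage to the limit in $N^\bullet(X)$, and the shared choice of defining data $(\mu_m, B_{i,m}, \delta_m)$---are routine for the restricted moving intersection product.
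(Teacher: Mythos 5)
Your proposal is correct and follows essentially the same route as the paper: compute both restricted products on a common countable sequence of models, choose $F$ very general so that it avoids the exceptional centers and meets $V$ (and the relevant strata) in expected codimension, identify $[\widetilde{V}]\cdot\mu_m^\ast[F]$ with the class of the strict transform $\widetilde{V\cap F}$, and conclude by the projection formula. Your extra detail on ruling out exceptional components of $\widetilde{V}\cap\widetilde{F}$ by the dimension count is just an expansion of the paper's statement that the strict transform of $V\cap F$ represents $[\phi_m^\ast F]\cdot[\widetilde{V}_m]$.
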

\begin{proof}
This is a generalisation of \cite[Lem.4.18(2)]{Leh13}: Consider a countable set of smooth $V$-birational models $\phi_m: (\widetilde{X}_m, \widetilde{V}_m) \rightarrow (X,V)$ on which the restricted product can be calculated, as 
\[ \langle D^k \rangle_{X|V} = \lim_{m \rightarrow \infty} \left( \phi_m \right)_\ast \left( [B_{1,m}] \cdots [B_{k,m} ]\right) \]
for big and nef divisors $B_{i,m}$ on $\widetilde{X}_m$. Choose $F$ sufficiently general so that it does not contain any of the $\phi_m$-exceptional centers and intersects $V$ generically transversally. Then the strict transform $\widetilde{V \cap F}$ of $V \cap F$ on $\widetilde{X}_m$ will be a cycle representing the class $[\phi_m^\ast F] \cdot [\widetilde{V}_m]$. Thus we can identify the classes
\begin{eqnarray*}  
\left( \phi_m \right)_\ast \left( [B_{1,m}] \cdots [B_{k,m}] \cdot \widetilde{V}_m \right) \cdot F & =  & 
             \left( \phi_m \right)_\ast \left( [B_{1,m}] \cdots [B_{k,m} ] \cdot [\phi_m^\ast F] \cdot \widetilde{V}_m \right) = \\
 & =  & \left( \phi_m \right)_\ast \left( [B_{1,m}] \cdots [B_{k,m}] \cdot [\widetilde{V \cap F}] \right), 
\end{eqnarray*}
and that implies the claimed equality.
\end{proof}

\begin{lem} \label{neq-prod-pos-int-lem}
Let $X$ be a smooth projective complex variety, $V \subset X$ a subvariety of dimension $d$ and $D$ a $V$-pseudoeffective divisor on $X$. If $k \leq d$ and $A$ is an ample divisor on $X$ then
\[ \langle D^k \rangle_{X|V} \neq 0 \Longleftrightarrow \langle D^k \rangle_{X|V} \cdot A^{d-k} > 0. \]
\end{lem}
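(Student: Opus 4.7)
The $\Leftarrow$ direction is immediate, since a strictly positive intersection number witnesses the non-vanishing of the class.

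For $\Rightarrow$, the plan is to use Lemma~\ref{res-free-lem} to reduce the claim to a top-dimensional restricted volume on a $k$-dimensional slice. Choose $m \in \mathbb{N}$ large enough that $|mA|$ is base-point free, and let $F_1,\ldots,F_{d-k}$ be very general members of $|mA|$. Since $\dim(V \cap F_1 \cap \cdots \cap F_j) = d-j \geq k$ at every stage $j \leq d-k$, iterated application of Lemma~\ref{res-free-lem} yields
\[ m^{d-k}\,\langle D^k \rangle_{X|V} \cdot A^{d-k} \;=\; \langle D^k \rangle_{X|V'}, \qquad V' := V \cap F_1 \cap \cdots \cap F_{d-k}, \]
with $\dim V' = k$. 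Very generality of the $F_i$ ensures $V' \not\subset \mathbb{B}_-(D)$, so $D$ is $V'$-pseudoeffective.

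A short check shows $\mathbb{B}_+(D + \delta A) \subset \mathbb{B}_-(D)$ for each $\delta > 0$ (take $\delta' < \delta$ in the definition of $\mathbb{B}_+$), so $D + \delta A$ is $V'$-big. The approximation formula of subsection~\ref{nu-res-ssec} combined with the Generalised Fujita Theorem (valid on $V'$ since $\dim V' = k$) then gives
\[ \langle D^k \rangle_{X|V'} \;=\; \lim_{\delta \downarrow 0}\langle (D+\delta A)^k \rangle_{X|V'} \;=\; \lim_{\delta \downarrow 0}\mathrm{vol}_{X|V'}(D + \delta A). \]
Thus the lemma reduces to showing that this limit is strictly positive precisely when $\langle D^k \rangle_{X|V} \neq 0$.

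The hard direction of this reformulated equivalence is the main obstacle: deducing strict positivity of a single numerical invariant from the non-vanishing of a class in $N_{d-k}(X)$. My plan is to apply the same reduction also to each $D + \delta A$ (which is $V$-big by the same $\mathbb{B}_+ \subset \mathbb{B}_-$ argument), so that $m^{d-k}\langle (D+\delta A)^k \rangle_{X|V}\cdot A^{d-k} = \mathrm{vol}_{X|V'}(D+\delta A)$. The left-hand side is a top intersection on the smooth $d$-dimensional variety $\widetilde{V}_m$ of big and nef classes (the restrictions of $B_{i,m}$ and $\phi_m^\ast A$), amenable to Khovanskii--Teissier-type inequalities that bound it below in terms of $\mathrm{vol}_{X|V}(D + \delta A)$. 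Passing to the limit $\delta \downarrow 0$ and using continuity of the restricted moving intersection on the $V$-pseudoeffective cone, I would extract a lower bound for the limiting restricted volume on $V'$ that is strict whenever $\langle D^k \rangle_{X|V}$ is nonzero.
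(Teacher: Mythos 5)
Your reduction steps are correct but they never touch the actual difficulty. The $\Leftarrow$ direction is indeed trivial, and slicing by very general members $F_i \in |mA|$ via Lemma~\ref{res-free-lem}, together with the approximation property of \ref{nu-res-ssec} and the Generalised Fujita Theorem, does give
\[ m^{d-k}\,\langle D^k \rangle_{X|V} \cdot A^{d-k} \;=\; \langle D^k \rangle_{X|V'} \;=\; \lim_{\delta \downarrow 0} \mathrm{vol}_{X|V'}(D+\delta A). \]
But at this point the hypothesis $\langle D^k \rangle_{X|V} \neq 0$ has not been used at all, and your plan for the ``hard direction'' never uses it in any concrete way either. Non-vanishing of a class in $N_{d-k}(X)$ means, by the definition of numerical equivalence, that it pairs nonzero against \emph{some} class of complementary dimension. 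To convert that into positivity against the particular class $A^{d-k}$ one needs three ingredients: (i) the dual group is generated by classes of subvarieties of codimension $d-k$ (Fulton, Ex.~19.1.5, via the Chern character isomorphism), so some subvariety $Y$ pairs nonzero; (ii) pseudoeffectivity of $\langle D^k \rangle_{X|V}$, so that pairings with subvarieties are nonnegative and hence $\langle D^k \rangle_{X|V} \cdot Y > 0$; (iii) the completion of $Y$ to a complete intersection $A_1 \cap \ldots \cap A_{d-k}$ with $A_i \in |lA|$, $l \gg 0$, writing $l^{d-k}A^{d-k} = Y + Y'$ with $Y'$ effective, whence $l^{d-k}\langle D^k \rangle_{X|V}\cdot A^{d-k} \geq \langle D^k \rangle_{X|V}\cdot Y > 0$. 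This short duality-plus-completion argument \emph{is} the paper's proof, and it is entirely absent from your proposal; your reformulation in terms of restricted volumes on $V'$ is not needed for it.

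The Khovanskii--Teissier plan cannot substitute for it, and in fact fails exactly where the lemma has content. KT-type inequalities bound the mixed intersection $\langle (D+\delta A)^k \rangle_{X|V}\cdot A^{d-k}$ from below by geometric means of pure top self-intersections, essentially by $\mathrm{vol}_{X|V}(D+\delta A)^{k/d}\cdot\bigl(A^d\cdot V\bigr)^{(d-k)/d}$ up to constants; this is what ``bounding below in terms of $\mathrm{vol}_{X|V}(D+\delta A)$'' can deliver. But $\lim_{\delta \downarrow 0}\mathrm{vol}_{X|V}(D+\delta A) = \langle D^d \rangle_{X|V}$, which vanishes whenever $D$ is not $V$-big --- in particular in the paper's application of this lemma, where $V = X$ and $k = \nu_{\mathrm{alg}}(D) < d = n$, so that $\langle D^{k+1}\rangle_{\mathrm{alg}} = 0$ and a fortiori $\langle D^n \rangle_{\mathrm{alg}} = 0$. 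Hence the lower bound you would extract degenerates to $0$ in the limit, not to anything strict. This is not a fixable technicality: no inequality among intersection numbers of $D+\delta A$ and $A$ alone can detect non-vanishing of the intermediate class $\langle D^k \rangle_{X|V}$, because that information lives precisely in the pairing with cycles of complementary dimension, which your argument never introduces.
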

\begin{proof}
The Chern character isomorphism $K(X)_{\mathbb{Q}} \rightarrow A(X)_\mathbb{Q}$ shows that $\langle D^k \rangle_{X|V}$ is numerically trivial if and only if $\langle D^k \rangle_{X|V} \cdot \alpha = 0$ for all $(n-d+k)$-cycles $\alpha \in A_{n-d+k}(X)$ (see \cite[Ex.19.1.5]{FIS}). 

\noindent Since $ A_{n-d+k}(X)$ is generated by subvarieties of codimension $d-k$ the pseudo-effective class $\langle D^k \rangle_{X|V} \not\equiv 0$ if and only if there exists a subvariety $Y \subset X$ of codimension $d-k$ such that $\langle D^k \rangle_{X|V} \cdot Y > 0$. But $Y$ is a component of a complete intersection $A_1 \cap \ldots \cap A_{d-k}$ of hyperplane sections $A_i \in |lA|$ for some $l \gg 0$, that is $A_1 \cap \ldots \cap A_{d-k} = Y_1 \cdots Y_s + Y^\prime$ for some subscheme $Y^\prime \subset X$. So we have 
\[
l^{d-k} \cdot \langle D^k \rangle_{X|V} \cdot A^{d-k}  = \langle D^k \rangle_{X|V} \cdot Y + \langle D^k \rangle_{X|V} \cdot Y^\prime  \geq    \langle D^k \rangle_{X|V} \cdot Y  > 0. \]

\noindent The opposite direction is obvious.
\end{proof}

\begin{prop} \label{resvol-diff-prop}
Let $X$ be an $n$-dimensional  smooth projective complex variety, $V = H_1 \cap \ldots \cap H_{n-k} \subset X$ a $k$-dimensional complete intersection subvariety cut out by very general free big and nef divisors $H_i$ linearly equivalent to $H$ and $A, B$ $V$-big and nef $\mathbb{R}$-divisors. Then:
\[ \mathrm{vol}_{X|V}(A-B) \geq A^k \cdot H^{n-k} - k \cdot A^{k-1} \cdot B \cdot H^{n-k}. \]
\end{prop}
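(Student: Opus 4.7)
The plan is to adapt the standard proof of the Siu-type inequality $\mathrm{vol}(A-B)\geq A^n - nA^{n-1}\cdot B$ for big and nef classes on $X$ (Proposition~3.1 in \cite{BFJ09}) to the restricted volume setting. I would first reduce to the case of integral divisors $A,B$ by continuity of both sides on the relative interior of the $V$-big and nef cone, noting that if the right-hand side of the claimed inequality is non-positive the inequality is trivial since $\mathrm{vol}_{X|V}\geq 0$. By the genericity of the $H_i$ in the free linear system $|H|$, a Bertini-type argument ensures that the complete intersection $V$ is smooth of dimension $k$ and transverse to each scheme $iB$ for $i\geq 1$.

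Next I would use the standard restriction exact sequence
\[ 0 \to \mathcal{O}_X(m(A-B)) \to \mathcal{O}_X(mA) \to \mathcal{O}_{mB}(mA) \to 0 \]
on $X$ and its restriction to $V$, which remains exact by the assumed transversality:
\[ 0 \to \mathcal{O}_V(m(A-B)|_V) \to \mathcal{O}_V(mA|_V) \to \mathcal{O}_{V \cap mB}(mA|_V) \to 0. \]
Taking $H^0$ on $V$ yields $h^0(V, m(A-B)|_V) \geq h^0(V, mA|_V) - h^0(V\cap mB, mA|_V)$. Since $A|_V$ is big and nef on the smooth $k$-dimensional variety $V$, asymptotic Riemann-Roch gives $h^0(V, mA|_V) = A^k\cdot H^{n-k}\cdot m^k/k! + o(m^k)$. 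Filtering the $(k-1)$-dimensional scheme $V\cap mB$ by successive copies of the Cartier divisor $V\cap B\subset V$ and applying asymptotic Riemann-Roch on each slice yields the matching upper bound $h^0(V\cap mB, mA|_V) \leq k\cdot A^{k-1}\cdot B\cdot H^{n-k}\cdot m^k/k! + o(m^k)$.

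The main obstacle is to transfer this lower bound on $h^0(V, m(A-B)|_V)$ into one on $h^0(X|V, m(A-B))$, since a priori only $h^0(X|V,D)\leq h^0(V, D|_V)$ holds. To this end I would invoke the Generalised Fujita Theorem recalled in Section~\ref{Vol-res-ssec}, applied to the $V$-big and nef class $A$: it gives $h^0(X|V, mA) = \langle A^k\rangle_{X|V}\cdot m^k/k! + o(m^k) = A^k\cdot H^{n-k}\cdot m^k/k! + o(m^k)$, matching $h^0(V,mA|_V)$ at leading order. Thus the restriction map $H^0(X,mA)\to H^0(V, mA|_V)$ is asymptotically surjective with cokernel of dimension $o(m^k)$. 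A rank-nullity argument applied to the composition $H^0(X, mA)\to H^0(V, mA|_V)\to H^0(V\cap mB, mA|_V)$ then shows that the intersection of the image of $H^0(X, mA)$ in $H^0(V, mA|_V)$ with $H^0(V, m(A-B)|_V)$ has dimension at least $(A^k - kA^{k-1} B)\cdot H^{n-k}\cdot m^k/k! + o(m^k)$. The hardest remaining step is to argue that essentially all such elements arise by restriction from sections of $m(A-B)$ on $X$, rather than merely from sections of $mA$ happening to vanish on $V\cap mB$, up to an $o(m^k)$ correction; this should follow from the generality of $V$ in the free family $|H|^{n-k}$ by estimating the cohomological obstruction to lifting a section of $mA$ vanishing on $V\cap mB$ to one vanishing on all of $mB$, which is controlled by groups whose rank grows strictly more slowly than $m^k$.
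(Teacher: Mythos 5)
Your counting on $V$ (asymptotic Riemann--Roch for $mA|_V$, the filtration of $V\cap mB$, and the rank--nullity step using the Generalised Fujita Theorem) is fine, but the step you yourself flag as ``the hardest remaining step'' is a genuine gap, and your proposed justification for it does not work. What you produce is a large space of sections of $mA|_V$ that vanish on $V\cap mB$ and lie in the image of $H^0(X,\mathcal{O}_X(mA))$; what $\mathrm{vol}_{X|V}(A-B)$ counts is the image of $H^0(X,\mathcal{O}_X(m(A-B)))$. To pass from the former to the latter you must modify a section $s\in H^0(X,\mathcal{O}_X(mA))$, whose restriction to $V$ vanishes on $V\cap mB$, by a section vanishing on $V$ so that the result vanishes on the divisor $mB$ itself. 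With the transversality you set up, $\mathcal{I}_{V\cap mB}=\mathcal{I}_V+\mathcal{I}_{mB}$ and $\mathcal{I}_V\cap\mathcal{I}_{mB}=\mathcal{I}_V\otimes\mathcal{O}_X(-mB)$, so the obstruction to the decomposition $s=s'+s''$ with $s'|_V=0$ and $s''\in H^0(X,\mathcal{O}_X(m(A-B)))$ lives in $H^1(X,\mathcal{I}_V\otimes\mathcal{O}_X(m(A-B)))$. This group has no reason to grow more slowly than $m^k$: since $A-B$ is in general neither nef nor effective, already $h^1(X,\mathcal{O}_X(m(A-B)))$ can grow like $m^n$ (for instance on a surface when $(A-B)^2<0$), and the Koszul resolution of $\mathcal{I}_V$ only reduces the question to line bundles of the form $m(A-B)-H_{i_1}-\cdots-H_{i_j}$, which are no better. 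Genericity of $V$ in the free family $|H|^{n-k}$ does not kill this obstruction, so your concluding claim is unsupported and false as a general cohomological statement.

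The paper's proof is organized precisely so that no section ever needs to be lifted from $V$ back to $X$. It first reduces to \emph{very ample} $A,B$ and, following Lazarsfeld's proof of the unrestricted inequality, replaces the thickened scheme $mB$ by $m$ \emph{general members} $B_1,\dots,B_m\in|B|$; it then compares the three levels $H^0(X,\cdot)$, $H^0(X|V,\cdot)$ and $H^0(V,\cdot)$ in one commutative diagram, in which the maps from the $X$-level onto the restricted-section level are surjective by the very definition of restricted sections, and the restricted-section level sits inside the $V$-level. This yields the inequality
\[ h^0(X|V,\mathcal{O}_X(m(A-B)))\ \geq\ h^0(X|V,\mathcal{O}_X(mA))-\sum_{i=1}^m h^0(B_i|V\cap B_i,\mathcal{O}_{B_i}(mA)) \]
directly between restricted-section spaces, with no transfer step. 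The correction terms are then restricted volumes on the honest reduced divisors $B_i$, which the Generalised Fujita Theorem and Lemma~\ref{res-free-lem} identify with the intersection numbers $A^{k-1}\cdot B\cdot H^{n-k}$; your filtration of the non-reduced scheme $V\cap mB$ admits no such interpretation, which is why your route forces you onto full section spaces on $V$ and into the lifting problem. Repairing your argument essentially amounts to redoing it with restricted sections at every stage, i.e.\ following the paper's route.
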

\begin{proof}
This is a generalisation of \cite[Thm.2.2.15]{LazPAG1}. By continuity of the usual intersection product it is enough to choose an ample divisor $H^\prime$ and prove the inequality for $A + \epsilon H^\prime, B + \epsilon H^\prime$, that is for ample $\mathbb{R}$-divisors $A, B$. Since the restricted volume is continuous and homogeneous on the cone spanned by the classes of ample divisors, we can even assume that $A, B$ are very ample divisors.

\noindent Let us fix $m > 0$ and choose $m$ general divisors $B_1, \ldots, B_m \in |B|$. Then we have a commutative diagram

{\tiny
\xymatrix{
0 \ar[r] & H^0(X, \mathcal{O}_X(mA - \sum_{i=1}^m B_i)) \ar[r] \ar[d] &  H^0(X, \mathcal{O}_X(mA)) \ar[r] \ar[d] &  
               \bigoplus_{i=1}^m H^0(B_i, \mathcal{O}_{B_i}(mA)) \ar[d] \\
0 \ar[r] &  H^0(X|V, \mathcal{O}_X(mA - \sum_{i=1}^m B_i)) \ar[r] \ar[d] &  H^0(X|V, \mathcal{O}_X(mA)) \ar[r] \ar[d] &
                \bigoplus_{i=1}^m H^0(B_i|V \cap B_i, \mathcal{O}_{B_i}(mA)) \ar[d]\\ 
0 \ar[r]&  H^0(V, \mathcal{O}_V(mA - \sum_{i=1}^m B_i)) \ar[r] &  H^0(V, \mathcal{O}_V(mA)) \ar[r] &     
                      \bigoplus_{i=1}^m H^0(V \cap B_i, \mathcal{O}_{V \cap B_i}(mA)) }
}

\noindent where in the upper row the vertical arrows correspond to surjective maps whereas in the lower row the vertical arrows correspond to inclusions. Consequently,
\[ h^0(X|V, \mathcal{O}_X(m(A-B))) \geq h^0(X|V, \mathcal{O}_X(mA)) - \sum_{i=1}^m h^0(B_i|V \cap B_i, \mathcal{O}_{B_i}(mA)). \]
Dividing by $\frac{m^k}{k!}$ and going to the limit $m \rightarrow \infty$ we obtain
\begin{eqnarray*}
\mathrm{vol}_{X|V}(A-B) & \geq & \mathrm{vol}_{X|V}(A) - \sum_{i=1}^m \frac{k}{m} \mathrm{vol}_{B_i|V \cap B_i}(A) \\
 & = & \langle A^k \rangle_{X|V} - \sum_{i=1}^m \frac{k}{m} \langle A^{k-1} \rangle_{B_i|V \cap B_i} \\
 & = & \langle A^k \rangle_X \cdot H^{n-k} - \sum_{i=1}^m \frac{k}{m} \langle A^{k-1} \rangle_{B_i} \cdot H^{n-k} \\
 & = & A^k \cdot H^{n-k} - k \cdot A^{k-1} \cdot B \cdot H^{n-k}
\end{eqnarray*}
using the Generalised Fujita Theorem (see \ref{Vol-res-ssec}), Lemma~\ref{res-free-lem} and the ampleness resp.\ freeness of $A$, $H$ and the $B_i$. 
\end{proof}

\noindent In the following, $D_1 \leq_V D_2$ means that the difference $D_2 - D_1$ of the two $\mathbb{R}$-divisors $D_1, D_2$ on $X$ is effective and the support of $D_2 - D_1$ does not contain the subvariey $V \subset X$.

\begin{prop} \label{resvol-expand-prop}
Let $X$ be an $n$-dimensional  smooth projective complex variety, $V = H_1 \cap \ldots \cap H_{n-k} \subset X$ a $k$-dimensional complete intersection subvariety cut out by very general free big and nef divisors $H_i$ linearly equivalent to $H$ and $B$ a big and nef $\mathbb{R}$-divisor such that $B \leq_V H$. If $\gamma$ is an arbitrary divisor class such that $H \pm \gamma$ is still nef then
\[ \mathrm{vol}_{X|V}(B+t\gamma) \geq B^k \cdot H^{n-k} + k \cdot t \cdot B^{k-1} \cdot \gamma \cdot H^{n-k} - c \cdot t^2 \]
for every $0 \leq t \leq 1$ and some constant $c>0$ only depending on $H^n$.
\end{prop}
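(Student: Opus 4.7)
The plan is to reduce the inequality to Proposition~\ref{resvol-diff-prop} via the decomposition
\[ B + t\gamma = (B + tH) - t(H - \gamma). \]
Here $B+tH$ is big and nef as a sum of such, while $H-\gamma$ is nef by hypothesis. Moreover, since the divisors $H_i$ cutting out $V$ are very general, one arranges $V \not\subset \mathbb{B}_+(B)$, and a fortiori $V \not\subset \mathbb{B}_+(B+tH)$, so $B+tH$ is $V$-big and nef. To cope with the fact that $t(H-\gamma)$ may fail to be $V$-big (even for $t > 0$), I would perturb by a small ample class $\epsilon A_0$, apply Proposition~\ref{resvol-diff-prop} to the decomposition $B+t\gamma = (B+tH+\epsilon tA_0) - t(H-\gamma+\epsilon A_0)$, and then let $\epsilon \downarrow 0$, invoking continuity of $\mathrm{vol}_{X|V}$ and of the intersection product on the cone of $V$-big and nef divisors (cf.~\ref{Vol-res-ssec}).

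Proposition~\ref{resvol-diff-prop} then produces
\[ \mathrm{vol}_{X|V}(B+t\gamma) \geq (B+tH)^k \cdot H^{n-k} - kt(B+tH)^{k-1}(H-\gamma) \cdot H^{n-k}. \]
Expanding both products via the binomial theorem gives
\begin{align*}
(B+tH)^k \cdot H^{n-k} &= B^k \cdot H^{n-k} + kt\, B^{k-1} \cdot H^{n-k+1} + R_1(t), \\
kt(B+tH)^{k-1}(H-\gamma) \cdot H^{n-k} &= kt\, B^{k-1} \cdot H^{n-k+1} - kt\, B^{k-1} \cdot \gamma \cdot H^{n-k} + R_2(t),
\end{align*}
where $R_1(t),R_2(t)$ collect all contributions of order $\geq t^2$. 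The two $kt\, B^{k-1} \cdot H^{n-k+1}$ summands cancel in the difference, leaving exactly the sought lower bound $B^k \cdot H^{n-k} + kt\, B^{k-1}\cdot\gamma\cdot H^{n-k}$, modulo the remainder $R_1(t)-R_2(t)$.

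The main technical step is to bound this remainder by $c\, t^2$ uniformly on $0 \leq t \leq 1$, with $c$ depending only on $H^n$ (and the fixed integers $n,k$). The coefficients of $t^j$ for $j \geq 2$ are intersection numbers of the shapes $B^a \cdot H^{n-a}$ or $B^a \cdot \gamma \cdot H^{n-a-1}$ with $a \leq k$. Because $H-B$ is effective (this is exactly the content of $B \leq_V H$) and $B,H$ are nef, the pseudoeffective divisor $H-B$ pairs nonnegatively with the nef $(n-1)$-cycle $B^{a-1}\cdot H^{n-a}$, so $B^a \cdot H^{n-a} \leq B^{a-1} \cdot H^{n-a+1}$, and induction gives $B^a \cdot H^{n-a} \leq H^n$. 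For the $\gamma$-terms, the nefness of $H \pm \gamma$ yields $|\gamma \cdot \eta| \leq H \cdot \eta$ on any nef $(n-1)$-cycle $\eta$, so $|B^a \cdot \gamma \cdot H^{n-a-1}| \leq B^a \cdot H^{n-a} \leq H^n$. Restricting to $t \leq 1$ then absorbs all higher powers of $t$ into the $t^2$ coefficient, producing the required constant $c$.
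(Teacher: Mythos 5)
Your proof is correct and follows essentially the same route as the paper: both arguments rewrite $B+t\gamma$ as a difference of nef classes and feed it into Proposition~\ref{resvol-diff-prop}, then absorb the higher-order intersection numbers into $c\,t^2$ using $B\leq H$ and the nefness of $H\pm\gamma$ to bound everything by multiples of $H^n$. The only (immaterial) differences are that the paper uses the decomposition $\bigl(B+t(\gamma+H)\bigr)-tH$ and quotes the concavity estimates of \cite{BFJ09} where you use $(B+tH)-t(H-\gamma)$ and expand by hand; your $\epsilon$-perturbation to restore $V$-bigness of the subtracted term is a legitimate substitute for the paper's choice of the automatically big subtrahend $tH$.
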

\begin{proof}
This is a generalisation of \cite[Cor.3.4]{BFJ09}. As in \cite[Cor.2.4]{BFJ09} we can use the assumption that $H \pm \gamma$ is nef to conclude that for $0 \leq t \leq 1$ and some constant $c^\prime > 0$ only depending on $H^n$,
\[ (B + t\gamma)^k \cdot H^{n-k} \geq B^k \cdot H^{n-k}+ k \cdot t \cdot B^{k-1} \cdot \gamma \cdot H^{n-k} - c^\prime \cdot t^2, \]
by replacing $\gamma$ with $(H + \gamma) - H$ and using that $H + \gamma \leq 2H$. If we also write $B + t\gamma$ as the difference of the two nef classes $C := B + t(\gamma+H)$ and $D := tH$ then we have furthermore
\[ (B + t\gamma)^k \cdot H^{n-k} = (C-D)^k \cdot H^{n-k} \leq C^k \cdot H^{n-k} - k \cdot C^{k-1} \cdot D \cdot H^{n-k} + c^{\prime\prime} \cdot t^2, \]
where $c^{\prime\prime}$ once again only depends on $H^n$: Indeed, $c^{\prime\prime}$ is controlled by $C^i \cdot H^{n-i}$, 
$0 \leq i \leq k-2$, and we have $C \leq 3H$. Thus we have
\[ C^k \cdot H^{n-k} - k \cdot C^{k-1} \cdot D \cdot H^{n-k} \geq  B^k \cdot H^{n-k}+ k \cdot t \cdot B^{k-1} \cdot \gamma \cdot H^{n-k} - (c^\prime + c^{\prime\prime})t^2. \]
The result follows by applying Prop.~\ref{resvol-diff-prop} to $B + t\gamma = C - D$.
\end{proof}

\noindent \textit{Proof of Thm.~\ref{der-resvol-thm}.} Let $H$ be a very general divisor linearly equivalent to the $H_i$, and assume that $\alpha$ is represented by the $\mathbb{R}$-divisor $A \leq_V H$ and that $H \pm \gamma$ is nef. If this is not the case replace $\alpha, \gamma$ by multiples $s\alpha, s\gamma$ with $s > 0$ sufficiently small. The claim for $\alpha, \gamma$ still follows, by homogeneity of restricted volumes and moving intersection numbers. 

\noindent Let $\beta$ be a nef divisor class on a $V$-birational model $\phi: (\widetilde{X}, \widetilde{V}) \rightarrow (X,V)$ such that $\beta$ is represented by the $\mathbb{R}$-divisor $B \leq_{\widetilde{V}} \phi^\ast \alpha$, hence also $B \leq_{\widetilde{V}} \phi^\ast H$. Since $\widetilde{V}$ is cut out by the big and nef divisors $\phi^\ast H_i$ Prop.~\ref{resvol-expand-prop} shows
\[ \mathrm{vol}_{X|V}(\alpha + t\gamma) \geq \mathrm{vol}_{\widetilde{X}|\widetilde{V}}(\beta + t\phi^\ast\gamma) \geq \beta^k \cdot (\phi^\ast H)^{n-k} + k \cdot t \cdot \beta^{k-1} \cdot \phi^\ast \gamma \cdot (\phi^\ast H)^{n-k} - c \cdot t^2 \]
for every $0 \leq t \leq 1$ and some constant $c > 0$ only depending on $H^n$. Taking the supremum over all nef classes $\beta \leq_{\widetilde{V}} \phi^\ast \alpha$ yields
\[ \mathrm{vol}_{X|V}(\alpha + t\gamma) \geq \mathrm{vol}_{X|V}(\alpha) + k \cdot t \cdot \langle \alpha^{k-1} \rangle_{X|V} \cdot \gamma - c \cdot t^2, \]
using Lem.~\ref{res-free-lem} and the Generalised Fujita Theorem. This holds for every $0 \leq t \leq 1$, and in fact also for every $-1 \leq t \le 0$, by replacing $\gamma$ with $-\gamma$.

\noindent Exchanging the roles of $\alpha + t\gamma$ represented by an $\mathbb{R}$-divisor $A^\prime \leq 2H$ and $\alpha = (\alpha + t\gamma) + t \cdot (-\gamma)$ we obtain
\[ \mathrm{vol}_{X|V}(\alpha) \geq \mathrm{vol}_{X|V}(\alpha+t\gamma) - k \cdot t \cdot \langle (\alpha+t\gamma)^{k-1} \rangle_{X|V} \cdot \gamma - c^\prime \cdot t^2 \]
for a constant $c^\prime$ possibly larger than $c$ but still only depending on $H^n$. Combining the two inequalities shows that
\[ \frac{d}{dt}_{|t=0} \mathrm{vol}_{X|V}(\alpha+t\gamma) = k \cdot \langle \alpha^{k-1} \rangle_{X|V} \cdot \gamma \]
as desired, since $\langle (\alpha+t\gamma)^{k-1} \rangle_{X|V}$ converges to $\langle \alpha^{k-1} \rangle_{X|V}$ if $t \rightarrow 0$.
\hfill $\Box$

\noindent To prove $\kappa_\nu(D) \leq \nu_{\mathrm{alg}}(D)$ we finally need to connect divisorial Zariski decomposition and algebraic moving intersection product. For the K\"ahler intersection product this was done in \cite[Thm.3.5]{BDPP13}.
\begin{prop}[{\cite[Ex.4.10]{Leh13}}] \label{alg-ZD-prop}
Let $X$ be a smooth projective complex variety and $D$ a pseudoeffective $\mathbb{R}$-divisor. Then the negative part of the divisorial Zariski decomposition $D = P_\sigma(D) + N_\sigma(D)$ can be calculated as
\[ N_\sigma(D) = D - \langle [D] \rangle_{\mathrm{alg}}. \]
\end{prop}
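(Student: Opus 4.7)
\noindent \textit{Proof plan.} The claim is equivalent to $\langle [D] \rangle_{\mathrm{alg}} = P_\sigma(D)$ as numerical classes, and the plan is to establish this identity by adapting the K\"ahler-side argument of \cite[Thm.~3.5]{BDPP13} to the numerical setting of \cite{BFJ09}. First, I would reduce to the case that $D$ is big: both sides are defined as limits of the corresponding quantities for $D + \delta A$ as $\delta \downarrow 0$, by Definition~\ref{ZD-def} on the right, and by construction together with the continuity of the positive product on the big cone \cite[Prop.~2.9]{BFJ09} on the left, so this reduction is immediate. Assuming now $D$ is big, I would invoke the Fujita-type description $\langle [D] \rangle_{\mathrm{alg}} = \lim_m (\mu_m)_\ast [B_m]$, where $\mu_m: X_m \to X$ is a smooth birational model and $\mu_m^\ast D \equiv B_m + E_m$ with $B_m$ big and nef on $X_m$ and $E_m$ effective. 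Pushing forward numerically gives $(\mu_m)_\ast [B_m] \equiv [D] - [(\mu_m)_\ast E_m]$, so the desired identity reduces to showing $(\mu_m)_\ast E_m \to N_\sigma(D)$ along a suitable approximating sequence.

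\noindent The convergence will be established by a two-sided squeeze at the level of each prime divisor $\Gamma \subset X$. For the lower bound $N_\sigma(D) \leq (\mu_m)_\ast E_m$ (coefficient-wise), observe that $(\mu_m)_\ast [B_m]$, being a pushforward of a nef class from a birational model, lies in the closure of the movable cone $\overline{\mathrm{Mov}(X)}$, on which $\sigma_\Gamma$ vanishes identically by \cite[Lem.~III.1.14]{Nak04}. Subadditivity of $\sigma_\Gamma$ applied to the pseudoeffective splitting $[D] \equiv (\mu_m)_\ast [B_m] + (\mu_m)_\ast [E_m]$ then yields $\sigma_\Gamma(D) \leq 0 + \sigma_\Gamma((\mu_m)_\ast E_m) \leq \mathrm{mult}_\Gamma((\mu_m)_\ast E_m)$, uniformly in $m$.

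\noindent For the matching upper bound, I would refine the choice of Fujita approximations so that $(\mu_m)_\ast B_m$ actually converges to $P_\sigma(D)$ in $N^1(X)$, equivalently so that $(\mu_m)_\ast E_m \leq N_\sigma(D) + \epsilon_m A$ with $\epsilon_m \downarrow 0$. This uses the characterisation of $N_\sigma(D)$ as the greatest effective $\mathbb{R}$-divisor $N$ for which $D - N$ still lies in $\overline{\mathrm{Mov}(X)}$. The hard part is precisely this upper bound: it is not formal but relies on Boucksom's construction of the divisorial Zariski decomposition via approximation by movable classes coming from birational modifications of $X$ \cite{Bou04} (see also \cite[III.1]{Nak04}). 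Once both bounds are in hand, letting $m \to \infty$ gives $(\mu_m)_\ast E_m \to N_\sigma(D)$, hence $\langle [D] \rangle_{\mathrm{alg}} = [D] - [N_\sigma(D)] = P_\sigma(D)$, which is the claim of the proposition.
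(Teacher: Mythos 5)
First, note that the paper does not actually prove this proposition: it is quoted verbatim from \cite[Ex.4.10]{Leh13}, so there is no in-paper argument to compare yours against. Judged on its own terms, your outline follows the standard route (reduce to $D$ big, take a Fujita approximation $\mu_m^\ast D \equiv B_m + E_m$, and squeeze $(\mu_m)_\ast E_m$ against $N_\sigma(D)$ from both sides). The reduction to the big case via Def.~\ref{ZD-def} and the continuity of the positive product, and the lower bound $\mathrm{mult}_\Gamma((\mu_m)_\ast E_m) \geq \sigma_\Gamma(D)$ via subadditivity of $\sigma_\Gamma$ together with its vanishing on $\overline{\mathrm{Mov}}(X)$, are both correct.

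Two points need repair. First, your characterisation of $N_\sigma(D)$ is backwards: it is the \emph{least} effective $N$ with $D-N\in\overline{\mathrm{Mov}}(X)$, not the greatest --- indeed your own lower-bound computation shows $N \geq N_\sigma(D)$ coefficient-wise for any such $N$, and ``greatest'' already fails for $D$ ample and effective, where $D-\epsilon D$ stays movable. As stated this characterisation cannot drive the upper bound. Second, the upper bound $(\mu_m)_\ast E_m \leq N_\sigma(D) + \epsilon_m A$ is where all of the content sits, and you leave it as a vague pointer to \cite{Bou04}. The precise input you need is the refined Fujita approximation of \cite[Prop.~3.5~\&~3.7]{Leh13}, which this paper itself invokes in Section~\ref{kn-nalg-sec}: the models and big and nef divisors can be chosen so that $B_m \leq P_\sigma(\mu_m^\ast D) \leq B_m + \tfrac{1}{m}\mu_m^\ast G$ for a fixed effective divisor $G$. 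Pushing forward and using that $(\mu_m)_\ast N_\sigma(\mu_m^\ast D) = N_\sigma(D)$ (by \cite[III.5.16]{Nak04}, since $N_\sigma(\mu_m^\ast D) - \mu_m^\ast N_\sigma(D)$ is effective and $\mu_m$-exceptional) gives $(\mu_m)_\ast B_m \rightarrow P_\sigma(D)$, which is exactly your upper bound. With that reference made explicit and the min/max slip corrected, your plan becomes a correct proof.
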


\noindent \textit{Proof of $\kappa_\nu(D) \leq \nu_{\mathrm{alg}}(D)$.} 
First assume that $\nu_{\mathrm{alg}}(D) = 0$. By definition this means that the positive product $\langle D \rangle_{\mathrm{alg}} = 0$, hence $P_\sigma(D) \equiv 0$ and $D \equiv N_\sigma(D)$ by Prop.~\ref{alg-ZD-prop}. Consequently, $\kappa_\nu(D) = 0$ by \cite[V.2.22(2)]{Nak04}.

\noindent So from now on we assume $1 \leq k := \nu_{\mathrm{alg}}(D) < \kappa_\nu(D) \leq n := \dim X$ and derive a contradiction: Fix an ample divisor $H$ on $X$ and $\epsilon > 0$. Choose birational models $\pi_i: X_i \rightarrow X$ and big and nef divisors 
\[ B_i \leq \pi_i^\ast (D + \epsilon H) \]
on $X_i$ such that the limit of the push forwards $\pi_{i\ast}(B_i^k)$ calculates the product $\langle (D+\epsilon H)^k \rangle_{\mathrm{alg}}$. \cite[Prop.3.5\&3.7]{Leh13} tell us that for a suitable effective divisor $G$ on $X$ we can further assume that the big and nef divisors $B_i$ satisfy
\[ B_i \leq P_\sigma(\pi_i^\ast (D + \epsilon H)) \leq B_i + \frac{1}{i} \pi_i^\ast G. \]

\noindent Let $W$ be a smooth $k$-dimensional intersection of very general very ample divisors such that $W \not\subset \mathbb{B}_-(D)$ and for all $i$, the centers of $\pi_i$ intersect $W$ transversally. Set $\phi: Y \rightarrow X$ to be the blow up of $X$ along $W$, with exceptional divisor $E$. Fix a very ample divisor $H_Y$ on $Y$. By \cite[V.2.21]{Nak04} $k < \kappa_\nu(D)$ implies that $D \succeq W$, that is, for each sufficiently small $\epsilon >0$ there exists a $\tau > 0$ such that $\phi^\ast D - \tau E + \epsilon H_Y$ is pseudoeffective and $\frac{\tau}{\epsilon} \rightarrow \infty$ when $\epsilon \rightarrow 0$.

\noindent Choose smooth birational models $\psi_i: (Y_i, E_i) \rightarrow (Y, E)$ lifting the birational maps $\pi_i: X_i \rightarrow X$ 
on which the restricted product $\langle (\phi^\ast (D + \epsilon H) + \epsilon H_Y)^k \rangle_{Y|E}$ can be computed. In particular there exist birational maps $\phi_i: Y_i \rightarrow X_i$ such that $\pi_i \circ \phi_i = \phi \circ \psi_i$ and big and nef divisors $A_i \leq_{E_i} \psi_i^\ast(\phi^\ast (D + \epsilon H) + \epsilon H_Y)$ on $Y_i$ such that the limit of the push forwards $\psi_{i\ast}(A_i^k \cdot E_i)$ calculates the restricted product. By the choice of $W$, $\phi_i(E_i)$ is the strict transform of $W$ on $X_i$. Hence $\phi_i^\ast B_i + \epsilon \psi_i^\ast H_Y \leq_{E_i} \psi_i^\ast(\phi^\ast (D + \epsilon H) + \epsilon H_Y)$, and  as in the proof of \cite[Lem.2.6]{BFJ09} we can achieve that $A_i \geq_{E_i} \phi_i^\ast B_i + \epsilon \psi_i^\ast H_Y$, by possibly further blowing up $Y_i$.

\noindent Since $W \not\subset \mathbb{B}_-(D)$, $E$ is not a component of $N_\sigma(\phi^\ast(D + \epsilon H))$. By \cite[III.5.16]{Nak04} $N_\sigma(\psi_i^\ast \phi^\ast(D + \epsilon H)) \geq \psi_i^\ast N_\sigma(\phi^\ast(D + \epsilon H))$, and every component in the difference is $\psi_i$-exceptional. Consequently, $E_i$ is not a component of $N_\sigma(\psi_i^\ast \phi^\ast(D + \epsilon H))$ either. Furthermore,  $\psi_i^\ast \phi^\ast (D + \epsilon H) + \epsilon \psi_i^\ast H_Y - \tau E_i \geq \psi_i^\ast \phi^\ast D + \epsilon \psi_i^\ast H_Y - \tau \psi_i^\ast E$ is pseudoeffective.

\noindent Then we can deduce that $P_\sigma(\psi_i^\ast \phi^\ast (D + \epsilon H)) + \epsilon \psi_i^\ast H_Y - \tau E_i$ is pseudoeffective: There is a pseudo-effective divisor $F$ such that 
\[ P_\sigma(\psi_i^\ast \phi^\ast (D + \epsilon H)) + N_\sigma(\psi_i^\ast \phi^\ast (D + \epsilon H)) + \epsilon \psi_i^\ast H_Y = \tau E_i + F. \]
Since $ P_\sigma(\psi_i^\ast \phi^\ast (D + \epsilon H)) + \epsilon \psi_i^\ast H_Y$ is big and in particular effective, we have
\[ \tau E_i + F \geq N_\sigma(\psi_i^\ast \phi^\ast (D + \epsilon H)). \]
Since $E_i$ does not appear on the right-hand side, we have 
\[ \psi_i^\ast \phi^\ast (D + \epsilon H) + \epsilon \psi_i^\ast H_Y - \tau E_i = F \geq  N_\sigma(\psi_i^\ast \phi^\ast (D + \epsilon H)) \] 
implying the claim.

\noindent Again by \cite[III.5.16]{Nak04}, 
\[ P_\sigma(\psi_i^\ast \phi^\ast (D + \epsilon H)) = P_\sigma(\phi_i^\ast \pi_i^\ast (D + \epsilon H)) \leq \phi_i^\ast P_\sigma(\pi_i^\ast (D + \epsilon H)). \] 
But
\[ \phi_i^\ast P_\sigma(\pi_i^\ast (D + \epsilon H)) + \epsilon \psi_i^\ast H_Y \leq \phi_i^\ast B_i + \epsilon \psi_i^\ast H_Y + \frac{1}{i}\phi_i^\ast \pi_i^\ast G \leq A_i + \frac{1}{i}\psi_i^\ast \phi^\ast G. \]
In particular, we conclude that $A_i + \frac{1}{i}\psi_i^\ast \phi^\ast G - \tau E_i$ is pseudoeffective. Therefore,
\begin{eqnarray*}
0 & \leq & (A_i + \frac{1}{i} \psi_i^\ast \phi^\ast G - \tau E_i) \cdot A_i^k \cdot \psi_i^\ast H_Y^{n-k-1} = \\
   &   =  & A_i^{k+1} \cdot \psi_i^\ast H_Y^{n-k-1} + \frac{1}{i} \psi_i^\ast \phi^\ast G \cdot A_i^k \cdot \psi_i^\ast H_Y^{n-k-1} - \tau E_i \cdot A_i^k \cdot \psi_i^\ast H_Y^{n-k-1}.
\end{eqnarray*}
By definition,
\[ 0 \leq A_i^{k+1} \cdot \psi_i^\ast H_Y^{n-k-1} \leq \langle (\phi^\ast (D + \epsilon H) + \epsilon H_Y)^{k+1} \rangle_Y \cdot H_Y^{n-k-1} \]
and
\[ 0 \leq A_i^k \cdot \psi_i^\ast \phi^\ast G \cdot \psi_i^\ast H_Y^{n-k-1} \leq \langle (\phi^\ast (D + \epsilon H) + \epsilon H_Y)^k \rangle_Y \cdot  \phi^\ast G \cdot H_Y^{n-k-1}. \]
So taking the limit over all models $\widetilde{Y}_i$ we obtain
\begin{equation} \label{lim-ineq}
0 \leq \langle (\phi^\ast (D + \epsilon H) + \epsilon H_Y)^{k+1} \rangle_Y \cdot H_Y^{n-k-1} - \tau \langle (\phi^\ast (D + \epsilon H) + \epsilon H_Y)^k \rangle_{Y|E} \cdot H_Y^{n-k-1}.
\end{equation}
If $V = H_1 \cap \ldots \cap H_{n-k-1} \subset Y$ is a $(k+1)$-dimensional complete intersection subvariety cut out by $n-k-1$ very general very ample divisors $H_i \in |H_Y|$, Thm.~\ref{der-resvol-thm} and  Lem.~\ref{res-free-lem} imply that 
\begin{eqnarray*} \label{der-eq}
\frac{d}{dt}_{|t=0} \langle (\phi^\ast (D + \epsilon H) + (\epsilon+t) H_Y)^{k+1} \rangle_{Y|V} & = & 
                                                       (k+1) \cdot \langle (\phi^\ast (D + \epsilon H) + \epsilon H_Y)^k \rangle_{Y|V} \cdot H_Y = \\
         & = & (k+1) \cdot \langle (\phi^\ast (D + \epsilon H) + \epsilon H_Y)^k \rangle_Y \cdot H_Y^{n-k}. 
\end{eqnarray*}
Furthermore, by definition $\lim_{\epsilon \downarrow 0} \langle (\phi^\ast (D + \epsilon H) + \epsilon H_Y)^k \rangle_Y = \langle (\phi^\ast D)^k \rangle_Y $, and the assumption $\nu_{\mathrm{alg}}(D) = k$ implies $\langle (\phi^\ast D)^k \rangle_Y \cdot H_Y^{n-k} > 0$ by Lem.~\ref{neq-prod-pos-int-lem} whereas $\langle (\phi^\ast D)^{k+1} \rangle_Y = 0$. Consequently, there exists $c > 0$ such that $\langle (\phi^\ast (D + \epsilon H) + \epsilon H_Y)^{k+1} \rangle_Y \cdot H^{n-k-1} \leq c \cdot \epsilon$. Then (\ref{lim-ineq}) implies that
\[ \tau \leq \frac{\langle (\phi^\ast (D + \epsilon H) + \epsilon H_Y)^{k+1} \rangle_Y \cdot H_Y^{n-k-1}}{\langle (\phi^\ast (D + \epsilon H) + \epsilon H_Y)^k \rangle_{Y|E} \cdot H_Y^{n-k-1}} \leq \epsilon \cdot \frac{c}{\langle (\phi^\ast (D + \epsilon H) + \epsilon H_Y)^k \rangle_{Y|E} \cdot H_Y^{n-k-1}}. \] 
The denominator of the right-hand side fraction tends to $\langle (\phi^\ast D)^k \rangle_{Y|E} \cdot H_Y^{n-k-1}$ if $\epsilon \rightarrow 0$. By choosing sufficiently general elements $H_1, \ldots, H_{n-k-1} \in |H_Y|$ we may assume that $\phi$ restricted to $E \cap H_1 \ldots \cap H_{n-k-1}$ is a finite morphism onto $W$. If $A_1, \ldots, A_{n-k}$ denote the very ample divisors on $X$ cutting out $W$ there exists $C > 0$ such that
\begin{eqnarray*}
\langle (\phi^\ast D)^k \rangle_{Y|E} \cdot H_Y^{n-k-1} & = & \langle (\phi^\ast D)^k \rangle_{Y|E \cap H_1 \ldots \cap H_{n-k-1}} =
                                                                                                  C \cdot \langle D^k \rangle_{X|W} = \\
   & = & C \cdot \langle D^k \rangle_X \cdot A_1 \cdots A_{n-k}
\end{eqnarray*}
where the first and the last equality follow from Lem.~\ref{res-free-lem} and the middle equality from \cite[Prop.4.20]{Leh13}. By assumption and Lem.~\ref{neq-prod-pos-int-lem} this last product is positive, contradicting the unboundedness of $\frac{\tau}{\epsilon}$ for $\epsilon \rightarrow 0$.
\hfill $\Box$

\section{Birational Invariance and Abundance Conjecture} \label{Bir-Abund-sec}

\noindent To prove that the Abundance Conjecture~\ref{Abund-conj} is equivalent to the MMP-version of the Abundance Conjecture~\ref{MMP-Abund-conj} we need the birational invariance of the numerical dimension of the canonical bundle:
\begin{prop} \label{bir-inv-nd-prop}
Let $X$ be a smooth projective complex variety and $D$ a pseudoeffective divisor on $X$. Let $f: Y \rightarrow X$ be a birational morphism of smooth projective varieties and $E$ an $f$-exceptional effective $\mathbb{R}$-divisor on $Y$. Then:
\[ \nu_X(D) = \nu_Y(f^\ast D + E). \]
\end{prop}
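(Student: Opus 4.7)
The plan is to split the asserted equality into two independent statements, each of which can be attacked using whichever formulation of the numerical dimension from Thm.~\ref{numdim-eq-thm} is most convenient:
\begin{enumerate}
\item[(i)] $\nu_X(D) = \nu_Y(f^\ast D)$,
\item[(ii)] $\nu_Y(f^\ast D) = \nu_Y(f^\ast D + E)$.
\end{enumerate}
For both we will use $\nu_{\mathrm{alg}}$, since the moving intersection product interacts transparently with birational morphisms and with the divisorial Zariski decomposition (via Prop.~\ref{alg-ZD-prop} and the construction in~\ref{nu-alg-ssec}).

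For (i), any $Y$-birational model $\mu:Z\to Y$ is simultaneously an $X$-birational model via $f\circ\mu$, with $(f\circ\mu)^\ast D=\mu^\ast(f^\ast D)$. A cofinal system of such models therefore computes $\langle(f^\ast D)^k\rangle_{\mathrm{alg}}$ on $Y$ as $\lim(\mu_m)_\ast(\beta_{1,m}\cdots\beta_{k,m})$, and after further pushforward by $f$ it computes $\langle D^k\rangle_{\mathrm{alg}}$ on $X$; hence $f_\ast\langle(f^\ast D)^k\rangle_{\mathrm{alg}}=\langle D^k\rangle_{\mathrm{alg}}$. It then suffices to check that a pseudoeffective class $\alpha\in N^k(Y)_{\mathbb{R}}$ with $f_\ast\alpha=0$ must itself vanish: the projection formula gives $\alpha\cdot(f^\ast H)^{n-k}=f_\ast\alpha\cdot H^{n-k}=0$ for $H$ ample on $X$, and since $f^\ast H$ is big and nef and $\alpha$ is pseudoeffective, this forces $\alpha=0$.

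For (ii), we exploit that the construction of $\langle L^k\rangle_{\mathrm{alg}}$ in~\ref{nu-alg-ssec} uses big-and-nef approximants $\beta_{i,m}$ to $P_\sigma(\mu_m^\ast L)$ from below (as described in \cite[Prop.~3.5\&3.7]{Leh13}), so $\langle L^k\rangle_{\mathrm{alg}}$ depends only on the numerical class of $P_\sigma(L)$. It therefore suffices to establish the identity
\[P_\sigma(f^\ast D+E)=P_\sigma(f^\ast D),\]
together with its analogue on every $Y$-birational model (where $f$, $E$ are replaced by $f\circ\mu$, $\mu^\ast E$, still effective and $(f\circ\mu)$-exceptional). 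Then the approximating intersections agree in the limit and yield $\langle(f^\ast D+E)^k\rangle_{\mathrm{alg}}=\langle(f^\ast D)^k\rangle_{\mathrm{alg}}$.

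The main obstacle is thus proving $P_\sigma(f^\ast D+E)=P_\sigma(f^\ast D)$, equivalently $\sigma_\Gamma(f^\ast D+E)=\sigma_\Gamma(f^\ast D)+\mathrm{mult}_\Gamma(E)$ for every prime divisor $\Gamma$. The plan is to argue via the Negativity Lemma, first in the big case. Given an effective $\Delta\equiv f^\ast D+E$ on $Y$, the pushforward $f_\ast\Delta\equiv D$ is effective, so
\[G:=\Delta-f^\ast f_\ast\Delta-E\]
is $f$-exceptional (since $f_\ast G=f_\ast\Delta-f_\ast\Delta=0$) and numerically trivial in $N^1(Y)$ (since $\Delta-f^\ast f_\ast\Delta\equiv E$). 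Both $G$ and $-G$ are therefore $f$-nef, so the Negativity Lemma applied in both directions forces $G=0$ and $\Delta=f^\ast f_\ast\Delta+E\geq E$. Taking multiplicities along $\Gamma$ and infima over $\Delta$ gives the identity when $f^\ast D+E$ is big; the pseudoeffective case then follows by perturbing $D$ to $D+\epsilon A_X$ with $A_X$ ample on $X$ and letting $\epsilon\to 0$, using the continuity of $\sigma_\Gamma$ on the big cone.
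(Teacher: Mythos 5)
Your part (ii) is essentially the paper's own argument: the paper also reduces everything to the identity $P_\sigma(f^\ast D+E)=P_\sigma(f^\ast D)$ (and its analogue on higher models), proved via the claim that every effective $\Delta\equiv f^\ast D+E$ satisfies $\Delta\geq E$, which in turn rests on the fact that a nonzero numerically trivial $\mathbb{R}$-linear combination of $f$-exceptional prime divisors cannot exist. You obtain this from the Negativity Lemma applied to $G=\Delta-f^\ast f_\ast\Delta-E$ in both directions, where the paper uses general complete intersection curves meeting exactly one exceptional divisor; and you conclude via $\nu_{\mathrm{alg}}$ (using that the moving product only sees $P_\sigma$ on a cofinal system of models) where the paper invokes $\nu_{\mathrm{Vol,Zar}}$. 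These are harmless variations, and your version of the multiplicity claim ($\Delta=f^\ast f_\ast\Delta+E$) is in fact slightly cleaner than the paper's.

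The genuine gap is in part (i). The statement you reduce to --- a pseudoeffective class $\alpha\in N^k(Y)_{\mathbb{R}}$ with $f_\ast\alpha=0$ must vanish --- is false: for $k=1$ take $\alpha=[E_i]$ the class of an $f$-exceptional prime divisor (effective, hence pseudoeffective, with $f_\ast[E_i]=0$ but $[E_i]\neq 0$); dually, for $k=n-1$ take the class of a curve contracted by $f$. Your justification breaks at exactly this point: $f^\ast H$ is big and nef but \emph{not} ample, and it annihilates every class supported over the exceptional locus, so $\alpha\cdot(f^\ast H)^{n-k}=0$ does not force $\alpha=0$ (Lemma~\ref{neq-prod-pos-int-lem} genuinely requires an ample class). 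What you actually need is that the specific class $\langle(f^\ast D)^k\rangle_{\mathrm{alg}}$, when nonzero, is not killed by $f_\ast$, and pseudoeffectivity alone does not give this. The direction you do get is $\langle D^k\rangle_{\mathrm{alg}}\neq 0\Rightarrow\langle(f^\ast D)^k\rangle_{\mathrm{alg}}\neq 0$, since classes computing the product over $X$ on models dominating $Y$ also compute a lower bound for the product over $Y$, whence $f_\ast\langle(f^\ast D)^k\rangle_{\mathrm{alg}}\geq\langle D^k\rangle_{\mathrm{alg}}$. For the converse you should either switch to a cohomological characterization ($\kappa_\sigma$ or $\nu_{\mathrm{Vol}}$), where comparing $X$ and $Y$ amounts to comparing $h^0$ of divisors differing by an effective exceptional divisor and a bounded ample twist (this is Nakayama's route, cf.\ \cite[Prop.V.2.7(4)]{Nak04} as recalled in Remark~\ref{exc-nd-rem}), or work with restricted products along a very general complete intersection $W\subset X$ avoiding the centers of $f$, so that $(f^\ast H)^{n-k}$ is represented by the strict transform $\widetilde{W}\cong W$ and Lemmas~\ref{res-free-lem} and~\ref{neq-prod-pos-int-lem} apply.
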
 
\begin{proof}
Let $E_1, \ldots, E_k$ be the prime components of $E = \sum_{i=1}^k x_i E_i$, $x_i > 0$.

\noindent Assume first that $D$ is big. Let $\Delta$ be an effective $\mathbb{R}$-divisor $\equiv f^\ast D + E$ on $Y$.

\vspace{0.2cm}

\noindent \textit{Claim.} $\mathrm{mult}_{E_i} \Delta \geq \mathrm{mult}_{E_i}(E) = x_i$ for all $i = 1, \ldots, k$.

\noindent \textit{Proof of Claim.} If $\mathrm{mult}_{E_j} \Delta < \mathrm{mult}_{E_j}(E)$ for a $j \in \{1, \ldots, k\}$ we subtract a multiple of $E_j$ from $\Delta$ and $E$ to obtain $\Delta^\prime \geq 0$, $E^\prime \geq 0$ such that $\Delta^\prime \equiv f^\ast D + E^\prime$ and $0 = \mathrm{mult}_{E_j}\Delta^\prime < \mathrm{mult}_{E_j}E^\prime$. Pushing forward $\Delta^\prime$ we have $f_\ast \Delta^\prime \equiv D$. Hence for real numbers $y_i$ such that $y_j \geq 0$,
\[ f^\ast D \equiv f^\ast f_\ast \Delta^\prime = \Delta^\prime + \sum_{i=1}^k y_i E_i \equiv f^\ast D + \sum_{i=1}^k y_i E_i + E^\prime. \]
Thus, $\sum_{i=1}^k y_i E_i + E^\prime$ is a non-trivial linear combination of the $E_i$ numerically equivalent to $0$.
But this is impossible as numerical classes of $f$-exceptional prime divisors are always linearly independent: On $X$, sufficiently general complete intersection curves $C$ avoid all centers $f(E_i)$ but one, hence the strict transform $\overline{C} \subset Y$ intersects the corresponding prime divisor on $Y$ but none else.
\hfill $\Box$

\vspace{0.2cm}

\noindent The claim implies that $\sigma_{E_i}(f^\ast D + E) \geq \mathrm{mult}_{E_i}(E)$. Taking the limit this also holds when $D$ is only pseudoeffective. Hence $E \leq N_{\sigma}(f^\ast D + E)$, this implies $N_\sigma(f^\ast D + E) - E = N_\sigma(f^\ast D)$, and
\[ P_\sigma(f^\ast D + E) = (f^\ast D + E) - N_\sigma(f^\ast D + E) = f^\ast D - N_\sigma(f^\ast D) = P_\sigma(f^\ast D). \]
The same holds when $\phi: \widetilde{Y} \rightarrow Y$ is a further birational morphism between smooth projective varieties:
\[ P_\sigma(\phi^\ast (f^\ast D + E)) = P_\sigma(\phi^\ast f^\ast D + \phi^\ast E) = P_\sigma(\phi^\ast f^\ast D). \]
Using that the numerical dimension can be defined by $\nu_{\mathrm{Vol, Zar}}$ (see~\ref{nu-vol-zar-ssec} and Def.~\ref{nd-def}) this implies $\nu_Y(f^\ast D + E) = \nu_Y(f^\ast D)$. Defining the numerical dimension via positive intersection products as $\nu_{\mathrm{alg}}$ shows that $\nu_Y(f^\ast D) = \nu_X(D)$, together with the projection formula and the fact that $f^\ast$ defines a homomorphism on the intersection rings.
\end{proof}

\begin{rem} \label{exc-nd-rem}
The proof above also shows that $\nu_Y(f^\ast D) = \nu_Y(f^\ast D + E)$ for a pseudoeffective $\mathbb{Q}$-divisor $D$ on $X$ and an effective $f$-exceptional divisor $E$ on $Y$ even when $X$ is not smooth but only $\mathbb{Q}$-factorial.

\noindent Note that for $\kappa_\sigma$, this invariance was shown in \cite[Prop.2.7(4)\&(7)]{Nak04}, hence Prop.~\ref{bir-inv-nd-prop} already follows from these results and Thm.~\ref{numdim-eq-thm}. However, our proof explicitly uses two different ways of defining the numerical dimension (as Nakayama implicitly does, too), and thus demonstrates much better the usefulness of Thm.~\ref{numdim-eq-thm}.
\end{rem}

\begin{cor} \label{can-bir-nd-cor}
Let $X$ be a non-uniruled smooth projective complex variety and $f: Y \rightarrow X$ a birational morphism between smooth projective varieties. Then:
\[ \nu_X(K_X) = \nu_Y(K_Y). \]
\end{cor}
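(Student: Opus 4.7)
The plan is to reduce Corollary~\ref{can-bir-nd-cor} directly to Proposition~\ref{bir-inv-nd-prop} by identifying the canonical divisor $K_Y$ with a sum of the pullback $f^\ast K_X$ and an $f$-exceptional effective divisor. Two ingredients suffice: first, that $K_X$ is indeed pseudoeffective, which is precisely the Boucksom--Demailly--P\u{a}un--Peternell theorem \cite[Cor.0.3]{BDPP13} applied to the non-uniruled smooth projective variety $X$; and second, that the relative canonical divisor $K_{Y/X} := K_Y - f^\ast K_X$ is effective and $f$-exceptional.

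For the second ingredient I would invoke the standard fact that for any birational morphism $f: Y \to X$ between smooth complex projective varieties there is an equality of divisors
\[ K_Y = f^\ast K_X + E, \]
where $E$ is an effective $f$-exceptional divisor on $Y$. This can be verified by comparing top differential forms locally: since both $X$ and $Y$ are smooth, $f$ is an isomorphism in codimension one outside a codimension two locus, and the pullback of a local generator of $\omega_X$ near an exceptional divisor vanishes along that divisor with multiplicity equal to the corresponding discrepancy, which is non-negative (in fact strictly positive on every $f$-exceptional component since $X$ is smooth, hence terminal).

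Once these two facts are in place, the conclusion is immediate: since $D := K_X$ is pseudoeffective on the smooth projective variety $X$, and $E := K_{Y/X}$ is an effective $f$-exceptional divisor on the smooth projective variety $Y$, Proposition~\ref{bir-inv-nd-prop} (with these choices of $D$ and $E$) yields
\[ \nu_X(K_X) = \nu_Y(f^\ast K_X + E) = \nu_Y(K_Y). \]

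No genuine obstacle arises: the corollary is a direct application of the already-established Proposition~\ref{bir-inv-nd-prop}, and the only input beyond that proposition is the standard factorisation $K_Y = f^\ast K_X + E$ together with BDPP's pseudoeffectivity of $K_X$ for non-uniruled $X$. The only point requiring minor care is ensuring that the hypothesis "pseudoeffective $D$" of Proposition~\ref{bir-inv-nd-prop} really is met, which is why the non-uniruledness of $X$ must be invoked explicitly.
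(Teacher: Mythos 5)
Your proposal is correct and follows essentially the same route as the paper: invoke \cite[Cor.0.3]{BDPP13} to get pseudoeffectivity of $K_X$ from non-uniruledness, write $K_Y = f^\ast K_X + E$ with $E$ effective and $f$-exceptional (the paper justifies this by noting that pullbacks of canonical forms are canonical forms, you via non-negative discrepancies on smooth varieties --- the same fact), and apply Proposition~\ref{bir-inv-nd-prop}. No further comment is needed.
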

\begin{proof}
By \cite[Cor.0.3]{BDPP13} the canonical divisors $K_X$ and $K_Y$ are pseudoeffective on the non-uniruled varieties $X$ and $Y$. Hence it is possible to calculate their numerical dimension. Since the pullback of canonical forms through a birational morphism is again a canonical form, there exists an effective $f$-exceptional divisor $E$ such that $K_Y = f^\ast K_X + E$. The corollary follows from Prop.~\ref{bir-inv-nd-prop}.
\end{proof}

\noindent A \textit{minimal model} of a non-uniruled smooth projective complex variety $X$ is a normal variety $S$ such that there exists a sequence of divisorial contractions and flips 
\[ X = X_0 \dasharrow X_1 \dasharrow \cdots \dasharrow X_n = S \]
and $K_S$ is nef (see e.g. \cite[Def.3-3-1 and passim]{Mat02} for further definitions). In particular, $S$ is $\mathbb{Q}$-factorial and has only terminal singularities, that is, every Weil divisor on $S$ is a $\mathbb{Q}$-Cartier divisor and if $f: Y \rightarrow S$ is a birational morphism from a smooth projective variety $Y$ then in the ramification formula
\[ K_Y = f^\ast K_S + \sum a_i E_i, \]
the coefficients $a_i$ of all the $f$-exceptional prime divisors $E_i$ are $> 0$.

\noindent Note that on a minimal model $S$ it is possible to construct intersection products of ($\mathbb{Q}$-)Cartier divisors and to define the numerical triviality of the resulting (rational) cycles (see \cite[19.1]{FIS}). Hence it makes sense to set the numerical dimension of a nef ($\mathbb{Q}$-)Cartier divisor $D$ on $S$ equal to
\[ \nu_S(D) := \max \{ k : D^k \not\equiv 0 \}. \]
If $S$ is smooth this numerical dimension coincides with the one defined in Def.~\ref{nd-def}, by construction of positive intersection products (see \ref{nu-alg-ssec}). 

\noindent The following result of Kawamata \cite{Kaw85b} (see also~\cite{Fuj11} and \cite{Kaw14}) sits at the core of the proof that the two versions of the Abundance Conjecture are equivalent:
\begin{thm}[Kawamata]
On a minimal model $S$, $\kappa_S(K_S) = \nu_S(K_S)$ if and only if $K_S$ is semi-ample.
\end{thm}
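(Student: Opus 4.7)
The plan is to prove the two implications separately. The easy direction is a direct computation: assume $K_S$ is semi-ample, so some $|mK_S|$ is basepoint free and defines a morphism $\phi: S \to Z$ with $\dim \phi(S) = \kappa_S(K_S) =: \kappa$ and $mK_S \sim \phi^\ast H$ for an ample $\mathbb{Q}$-Cartier divisor $H$ on $Z$. The projection formula yields $K_S^\kappa \cdot A^{n-\kappa} > 0$ for any ample $A$ on $S$, while $K_S^{\kappa+1} \equiv 0$ because $H^{\kappa+1} = 0$; hence $\nu_S(K_S) = \kappa$.

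For the hard direction, write $\kappa := \kappa_S(K_S) = \nu_S(K_S)$ and argue by cases. When $\kappa = n$, $K_S$ is nef and big on the klt variety $S$ (minimal models have terminal singularities), so the Kawamata--Shokurov basepoint-free theorem directly yields semi-ampleness. When $\kappa = 0$, we have $K_S \equiv 0$ (because $\nu_S(K_S) = 0$ forces the numerical class of $K_S$ itself to vanish) and some $mK_S$ is effective; a numerically trivial effective divisor with $\kappa \geq 0$ is $\mathbb{Q}$-linearly trivial, so $K_S$ is torsion and hence semi-ample.

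The heart of the argument is the intermediate case $0 < \kappa < n$. The strategy is to pass to a log resolution $\mu: \tilde{S} \to S$ and form the Iitaka fibration $f: \tilde{S} \to Y$ with $\dim Y = \kappa$ and general fiber $F$ satisfying $\kappa_F(K_F) = 0$. Using the canonical bundle formula one decomposes $\mu^\ast K_S \sim_{\mathbb{Q}} f^\ast(K_Y + B_Y + M_Y) + E$ with $E$ an $f$-vertical effective divisor, $B_Y$ the discriminant divisor and $M_Y$ the moduli part. The hypothesis $\nu = \kappa$, combined with Kawamata's semipositivity theorem for the moduli part, should force $K_Y + B_Y + M_Y$ to be big and nef on $Y$, and the basepoint-free theorem applied on $Y$ produces a basepoint-free multiple that descends, through $f$ and $\mu$, to a basepoint-free multiple of $K_S$.

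The main obstacle is verifying that the equality $\nu_S(K_S) = \kappa$ really concentrates the positivity of $K_S$ \emph{horizontally} (on the base of the Iitaka fibration) rather than vertically in $E$, and then ensuring that semi-ampleness descends cleanly from $Y$ back to $S$ despite the resolution $\mu$ and the vertical divisor $E$; this demands careful bookkeeping of discrepancies between $S$, $\tilde{S}$ and $Y$, together with a precise form of the canonical bundle formula. This is precisely where Kawamata's original analytic-geometric ingredients and the subsequent algebraic refinements by Fujino and others enter.
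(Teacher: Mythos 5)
Note first that the paper does not prove this statement: it is quoted as Kawamata's theorem with pointers to \cite{Kaw85b}, \cite{Fuj11} and \cite{Kaw14}, so there is no internal proof to compare against. Judged on its own terms, your treatment of the easy implication (projection formula for the semi-ample fibration) and of the extreme cases $\kappa = n$ (nef and big, basepoint-free theorem) and $\kappa = 0$ (an effective numerically trivial divisor is zero, so $K_S$ is torsion) is correct.

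The genuine gap is that the case $0 < \kappa < n$, which is the entire content of Kawamata's theorem, remains a strategy outline in which the three pivotal steps are asserted rather than proved. (i) You must show that the hypothesis $\nu_S(K_S) = \kappa$ forces the $f$-vertical divisor $E$ in $\mu^\ast K_S \sim_{\mathbb{Q}} f^\ast(K_Y + B_Y + M_Y) + E$ to contribute nothing, i.e.\ that the positivity really lives on the base; this is exactly where Kawamata's delicate argument (and the difference between $\nu = \kappa$ and the automatic inequality $\nu \geq \kappa$) enters, and your ``should force'' carries the whole weight of the theorem. (ii) Applying the basepoint-free theorem on $Y$ to $K_Y + B_Y + M_Y$ requires controlling the singularities of $(Y, B_Y)$ and, more seriously, the moduli part $M_Y$, which is only known to be nef; its semi-ampleness is in general open, and what Kawamata actually uses is his semipositivity theory for direct images combined with covering tricks, none of which is reproduced here. (iii) Even granting a semi-ample $K_Y + B_Y + M_Y$, base-point freeness does not ``descend cleanly'' from $f^\ast(\cdot) + E$ to $mK_S$, since $E$ is only $f$-vertical and not $\mu$-exceptional; one needs a relative basepoint-free argument over $Y$. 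Your closing paragraph concedes all of this by deferring to Kawamata's original ingredients and Fujino's refinements, so the proposal, like the paper, ultimately cites the theorem rather than proving it.
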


\begin{thm} \label{Abund-equiv-thm}
Let $S$ be a minimal model of a non-uniruled smooth projective complex variety $X$. Then
\[ \nu_X(K_X) = \kappa_X(K_X) \Longleftrightarrow K_S\ \mathrm{is\ semi-ample}. \]
\end{thm}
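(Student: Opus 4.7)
The plan is to bridge between the smooth variety $X$ and the (possibly singular) minimal model $S$ via a common smooth birational model, and then invoke Kawamata's theorem on $S$. The key point is that both invariants $\kappa$ and $\nu$ should be transportable across the birational map $X \dashrightarrow S$, even though $S$ is only $\mathbb{Q}$-factorial with terminal singularities.

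First I would take a resolution of singularities $\pi: Y \rightarrow S$ with $Y$ smooth and projective. The terminal singularity assumption gives the ramification formula $K_Y = \pi^\ast K_S + \sum a_i E_i$ with all $a_i > 0$ and all $E_i$ being $\pi$-exceptional, so $E := \sum a_i E_i$ is an effective $\pi$-exceptional divisor. By Remark~\ref{exc-nd-rem} (the extension of Prop.~\ref{bir-inv-nd-prop} to $\mathbb{Q}$-factorial bases), this yields $\nu_Y(K_Y) = \nu_Y(\pi^\ast K_S + E) = \nu_S(K_S)$, where the right-hand side is well defined since $K_S$ is nef Cartier (or at least $\mathbb{Q}$-Cartier) on the minimal model. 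Standard birational invariance of the Kodaira dimension for terminal singularities also gives $\kappa_Y(K_Y) = \kappa_S(K_S)$: pushing forward any section of $mK_Y$ to $S$ and pulling back preserves global sections thanks to $a_i > 0$.

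Next I would connect $X$ and $Y$. Since $X \dashrightarrow S$ is birational and $Y \to S$ is birational, $X$ and $Y$ are birationally equivalent smooth projective varieties, so I can choose a common smooth projective model $Z$ with birational morphisms $g: Z \to X$ and $h: Z \to Y$. Applying Corollary~\ref{can-bir-nd-cor} twice (note $X$ is non-uniruled, hence so are $Y$ and $Z$) gives $\nu_X(K_X) = \nu_Z(K_Z) = \nu_Y(K_Y)$. Combined with the standard birational invariance of Kodaira dimension between smooth varieties we also have $\kappa_X(K_X) = \kappa_Z(K_Z) = \kappa_Y(K_Y)$. Chaining these with the identifications from the previous paragraph yields
\[ \nu_X(K_X) = \nu_S(K_S) \qquad \text{and} \qquad \kappa_X(K_X) = \kappa_S(K_S). \]

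Finally I would invoke Kawamata's theorem quoted just above: on the minimal model $S$, the equality $\kappa_S(K_S) = \nu_S(K_S)$ holds if and only if $K_S$ is semi-ample. Substituting the two equalities above immediately gives the desired equivalence $\nu_X(K_X) = \kappa_X(K_X) \Leftrightarrow K_S$ is semi-ample.

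The main obstacle, and the reason the paper emphasises this argument at all, is the transport of $\nu$ from the singular minimal model $S$ to a smooth resolution $Y$: one must know that $\nu$ is well defined on $S$, agrees with $\nu_S(K_S):=\max\{k:K_S^k\not\equiv 0\}$ when the divisor is nef and Cartier, and is compatible with pullback up to an effective exceptional divisor. This is precisely where Theorem~\ref{numdim-eq-thm} pays off, because it lets us freely switch between the $\nu_{\mathrm{alg}}$ definition (compatible with intersection numbers, hence with the nef-Cartier definition used on $S$) and the $\nu_{\mathrm{Vol,Zar}}$ definition (needed in the proof of Prop.~\ref{bir-inv-nd-prop} to absorb the exceptional divisor $E$). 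Everything else is formal bookkeeping with birational morphisms.
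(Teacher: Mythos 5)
Your proposal is correct and follows essentially the same route as the paper: reduce to Kawamata's theorem on $S$ by transporting both $\kappa$ and $\nu$ through a common smooth model, using Corollary~\ref{can-bir-nd-cor} and Remark~\ref{exc-nd-rem} for $\nu$ (with the projection formula for the final descent to $S$) and the effectivity of the exceptional divisors for $\kappa$. The only cosmetic difference is that you interpose an extra model $Z$ dominating $X$ and $Y$, where the paper simply takes one common resolution of $X$ and $S$.
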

\begin{proof}
By Kawamata's Theorem we only need to prove that $\kappa_X(K_X) = \kappa_S(K_S)$ and $\nu_X(K_X) = \nu_S(K_S)$.

\noindent The first equality follows from using a common resolution

\hspace{4cm}
\xymatrix{
 & Y \ar[dr]^f \ar[dl]_{\phi} & \\
X \ar@{-->}[rr] & & S
}

\noindent of $X$ and $S$ such that $K_Y = f^\ast K_S + E_S = \phi^\ast K_X + E_X$ where $E_S$ and $E_X$ are $f$- resp.\ $\phi$-exceptional effective divisors. Since the sections in $H^0(Y, \mathcal{O}_Y(mK_Y)) = H^0(Y, \mathcal{O}_Y(m\phi^\ast K_X + mE_X))$ can be interpreted both as rational functions on $X$ and $Y$, we have $H^0(Y, \mathcal{O}_Y(mK_Y)) \subset H^0(X, \mathcal{O}_X(mK_X))$, and since $E_X$ is effective the inverse inclusion also holds. Similarly on $S$, and the equality follows.

\noindent For $\nu_X(K_X) = \nu_S(K_S)$ we use Cor.~\ref{can-bir-nd-cor} and Rem.~\ref{exc-nd-rem} to deduce the chain of equalities
\[ \nu_X(K_X) = \nu_Y(K_Y) = \nu_Y(f^\ast K_S + E_S) = \nu_Y(f^\ast K_S) = \nu_S(K_S) \]
where the last equality follows from the projection formula and the fact that $f^\ast$ defines a homomorphism on the intersection rings.
\end{proof}



\newcommand{\etalchar}[1]{$^{#1}$}
\def\cprime{$'$}

\end{document}